\documentclass[11pt,a4paper]{article}
\usepackage[utf8]{inputenc}
\usepackage[T1]{fontenc}
\usepackage[english]{babel}
\usepackage{lmodern}
\usepackage{authblk}
\usepackage{amsmath,amssymb,amsthm}
\usepackage{mathtools}
\usepackage{mathrsfs}
\usepackage[margin=2.5cm]{geometry}
\usepackage{enumitem}
\usepackage{microtype}
\usepackage[hidelinks]{hyperref}
\hypersetup{
  pdftitle={Markovization of Randomized Stopping Times},
  pdfauthor={K. O. Sokolov},
  pdfkeywords={randomized stopping time, Markovian intensity, relative entropy, occupation measure, Markovian projection, Skorokhod embedding}
}
\usepackage[nameinlink,noabbrev]{cleveref}
\newtheorem{theorem}{Theorem}[section]
\newtheorem{proposition}[theorem]{Proposition}
\newtheorem{lemma}[theorem]{Lemma}
\newtheorem{corollary}[theorem]{Corollary}
\theoremstyle{definition}
\newtheorem{assumption}[theorem]{Assumption}
\newtheorem{definition}[theorem]{Definition}
\newtheorem{example}[theorem]{Example}
\newtheorem{remark}[theorem]{Remark}
\newcommand{\R}{\mathbb R}
\newcommand{\E}{\mathbb E}
\newcommand{\1}{\mathbf 1}
\newcommand{\F}{\mathcal F}
\newcommand{\Borel}{\mathcal B}

\newcommand{\RST}{\mathrm{RST}}
\newcommand{\KL}{\mathrm{KL}}
\newcommand{\Law}{\mathrm{Law}}
\newcommand{\ri}{\operatorname{ri}}
\newcommand{\cx}{\preceq_{\mathrm{cx}}}

\newcommand{\aff}{\operatorname{aff}}
\newcommand{\conv}{\operatorname{conv}}
\title{Markovization of Randomized Stopping Times}
\author[1]{K. O. Sokolov}
\affil[1]{Lomonosov Moscow State University}
\date{}
\begin{document}
\maketitle

\begin{abstract}
We study randomized stopping times for a Markov process, described by a
progressively measurable intensity $\alpha_t(\omega)$. We prove that every
admissible intensity has a Markovian representative $\lambda(t,X_t)$,
explicitly obtained from the observed measure and the surviving
occupation measure. This representative preserves both the family of
surviving mass measures and the joint distribution of the stopping time and
the state at stopping. Whenever the relative entropy with respect to a
reference intensity $r(t,x)$ is finite, we prove an exact decomposition
showing that Markovization does not increase the entropy.

As a consequence, variational problems in which a randomized stopping
time enters only through its observed measure and an entropy penalty can be
reduced to optimization over Markovian intensities $\lambda(t,x)$. For every
observed measure admitting a finite-entropy representative, there is a unique
minimum-entropy randomized stopping time, and it is Markovian. We discuss the connection with optimal
Skorokhod embedding and prove a finite-constraint realization result for
Brownian stopping. We also approximate arbitrary observed measures by
those generated by bounded Markovian intensities.
\end{abstract}

\noindent\textbf{Keywords:} randomized stopping time; Markovian intensity;
relative entropy; occupation measure; Markovian projection; Skorokhod
embedding problem.

\section{Introduction}

Optimal stopping is a central topic in stochastic analysis and stochastic
control. A decision time is chosen using the observed history of a
process in order to optimize a given reward \cite{PeskirShiryaev2006}.
The admissible class can also be enlarged to randomized stopping times. These assign to each path a probability distribution on
$[0,\infty)$ whose distribution function is adapted to the observed
filtration. Equivalent forms of this randomization are used in game theory
\cite{TouziVieille2002,LarakiSolan2005,NeumannRamseySzajowski2002}.

Randomized stopping times are also used in the Skorokhod embedding problem. Since the classical constructions
\cite{Skorokhod1965,Root1969,AzemaYor1979}, this problem has provided many
examples of nontrivial stopping rules. A survey of the different
solutions and their properties is given in \cite{Obloj2004}. In the
optimal transport approach, randomized stopping times are used as an
admissible class. The geometry of optimal embeddings is described by
analogues of monotonicity in the Monge--Kantorovich problem
\cite{BeiglbockCoxHuesmann2017}. Related work studies stopping problems
with constraints on the distribution of the stopping time or the
terminal state
\cite{BayraktarMiller2019,BeiglbockEderElgertSchmock2018}.

Randomized stopping can also be formulated through a controlled
stopping intensity. The intensity determines the conditional law of the
stopping time and turns the stopping problem into a control problem. This
approach is used for controlled diffusions in \cite{GyongySiska2008} and
in work on reinforcement learning for optimal stopping \cite{Dong2024}.
Entropy-regularized randomized stopping is also studied through a singular
control formulation in \cite{DianettiFerrariXu2026}. In this paper, the
regularization is defined by relative entropy with respect to a reference
stopping intensity.

Occupation-measure methods provide a second point of contact. For
continuous-time Markov processes, Cho and Stockbridge
\cite{ChoStockbridge2002} formulate optimal stopping as a linear program
over an occupation measure and the joint distribution of the stopping
time and stopping location. More generally, Kurtz and Stockbridge
\cite{KurtzStockbridge1998} develop Markov-control reductions for
controlled martingale problems. These results provide the occupation-measure background for our
construction. For a fixed underlying Markov process, we start from a
possibly path-dependent stopping intensity and identify an explicit
time--state intensity that reproduces the full family of surviving mass
measures and the joint time--state stopping measure.

In this paper, a randomized stopping time is called \emph{Markovian} if
its intensity depends only on the current time and state, and thus has
the form $\lambda(t,X_t)$. Throughout, the term \emph{Markovian} refers
to this intensity-based class. In discrete time,
the analogous rule stops with a probability that depends on the current
state \cite{ChristensenLindensjoNeumann2025}. In continuous time, we work
with intensities with respect to Lebesgue time, as in
\cite{GyongySiska2008,Dong2024}.

Different randomized stopping times can have the same \emph{observed
measure}
$$
  \pi_\xi:=(t,X_t)_\#\xi
$$
on $[0,\infty)\times E$, while depending on the past in different ways.
We seek a representative of a prescribed observed measure whose
intensity is a Borel function of time and state. Given an admissible
path-dependent intensity, we construct such a representative explicitly.
The construction preserves the full family of surviving mass measures and
the observed measure. We then prove an exact relative entropy identity with a
nonnegative remainder. For every fixed observed measure admitting a
finite-entropy representative, this identity identifies a unique
minimum-entropy representative.

Fix a positive Borel intensity $r(t,x)$ satisfying the conditions stated
below, and let $\xi_r$ be the associated reference randomized stopping
time. We measure the difference between $\xi$ and $\xi_r$ by the relative
entropy $\KL(\xi\|\xi_r)$. This is analogous to the Schr\"odinger problem, where one selects a
law closest to a reference Markov law under marginal constraints
\cite{Schrodinger1931,Follmer1988,Leonard2014}. A related idea is used in
entropic optimal transport \cite{Cuturi2013,PeyreCuturi2019}. In our
setting the state dynamics are fixed, and relative entropy regularizes
the randomized stopping rule.

For an admissible intensity, write
$$
  \xi_\alpha(d\omega,dt)
  =S_t^\alpha(\omega)\alpha_t(\omega)\,dt\,P_\mu(d\omega),
  \qquad
  S_t^\alpha=\exp\left(-\int_0^t\alpha_s\,ds\right).
$$
We prove the entropy formula
$$
  \KL(\xi_\alpha\|\xi_r)
  =\E_{P_\mu}\int_0^\infty
       S_t^\alpha\ell_{r(t,X_t)}(\alpha_t)\,dt,
  \qquad
  \ell_a(z)=z\log\frac za-z+a.
$$
The equality is understood in $[0,\infty]$.

Our main result states that every admissible progressively measurable
intensity $\alpha$ has a Borel Markovian representative
$\lambda^\alpha(t,x)$. More precisely, if
$$
  L_t^\alpha(A)=\E_{P_\mu}[S_t^\alpha\1_{\{X_t\in A\}}],
  \qquad
  \overline L^\alpha(dt,dx)=dt\,L_t^\alpha(dx),
$$
then $\pi_\alpha\ll\overline L^\alpha$ and one may take
$$
  \lambda^\alpha
  =\frac{d\pi_\alpha}{d\overline L^\alpha}.
$$
The resulting rule satisfies
$$
  L_t^{\lambda^\alpha}=L_t^\alpha\quad(t\ge0),
  \qquad
  \pi_{\xi_{\lambda^\alpha}}=\pi_{\xi_\alpha}.
$$
If $\KL(\xi_\alpha\|\xi_r)<\infty$, then
$$
  \KL(\xi_\alpha\|\xi_r)
  =\KL(\xi_{\lambda^\alpha}\|\xi_r)
  +\E_{P_\mu}\int_0^\infty
      S_t^\alpha\ell_{\lambda^\alpha(t,X_t)}(\alpha_t)\,dt.
$$
The last term is nonnegative. It vanishes exactly when the original
intensity equals $\lambda^\alpha(t,X_t)$ almost everywhere with respect
to $S_t^\alpha\,dt\,P_\mu(d\omega)$. This gives a quantitative
Pythagorean-type entropy identity, in the spirit of \cite{Csiszar1975}.

The reduction is also related to Markovian mimicking. Classical
mimicking results \cite{Gyongy1986,BrunickShreve2013} replace the state
process while preserving selected marginal distributions. Related
Markovian reductions for killed processes appear in
\cite{CarmonaLauriereLions2024,CarmonaLacker2026}: the former treats
open-loop controls with soft killing, and the latter studies Markovian
projection for It\^o processes conditioned on survival under hard
killing. Our projection acts on the stopping intensity, leaves the underlying
Markov process fixed, and preserves the full family
$(L_t^\alpha)_{t\ge0}$ together with the joint law of the stopping time
and the state at stopping. The exact relative entropy decomposition quantifies this projection
and yields the variational reduction in Section~\ref{sec:variational}.

Consequently, if an objective depends on the stopping rule only through
$\pi_\xi$ and an entropy penalty relative to $\xi_r$, its infimum can be
computed over Markovian intensities. This replaces optimization over
path-dependent rules by optimization over Borel functions on
$[0,\infty)\times E$. The same method applies to path-dependent criteria after adding
suitable coordinates to the state, provided that the augmented process
satisfies Assumption~\ref{ass:markov-class}.

For each observed measure that admits a finite-entropy stopping rule,
relative entropy selects a unique Markovian minimizer. Uniqueness follows
from strict convexity with respect to the stopping measure.

The framework also applies when the terminal law is specified only through
finitely many linear constraints. Section~\ref{sec:brownian}
shows that, for Brownian motion, such constraints can be realized by a
Markovian intensity when the prescribed vector lies in the relative
interior of the convex-order moment region. For example, the functions
$g_i(x)=(x-K_i)^+$ give finite terminal constraints that also arise in
Skorokhod embedding and martingale optimal transport
\cite{Hobson1998,Hobson2011,BeiglbockHenryLaborderePenkner2013,
GalichonHenryLabordereTouzi2014}.

The general results are proved for Markov processes with c\`adl\`ag paths
in a Polish state space. Markovization preserves the observed measure.
Example~\ref{ex:markovization-limit} shows that it may change the law of
the stopped path.

Section~\ref{sec:preliminaries} introduces the measure-valued balance
relations and the entropy formula. Section~\ref{sec:markovization}
proves the Markovization theorem and the exact entropy decomposition.
Section~\ref{sec:variational} gives the variational consequences,
Section~\ref{sec:smoothing} proves approximation by bounded Markovian
intensities, and Section~\ref{sec:brownian} treats finite-dimensional
Brownian terminal constraints.

\section{Basic definitions and preliminary results}
\label{sec:preliminaries}

Let $E$ be a Polish space and let $\Omega=D([0,\infty),E)$ be the space
of right-continuous paths with left limits, equipped with the Skorokhod
topology and its Borel $\sigma$-algebra. Write
$X_t(\omega)=\omega(t)$ for the canonical process and
$$
  \F_t^0=\bigcap_{u>t}\sigma(X_r:0\le r\le u)
$$
for its right-continuous natural filtration.
We use $B_b(E)$ for bounded Borel functions, $B_b^+(E)$ for their
nonnegative members, and $\mathcal P(E)$ for probability measures.

\begin{assumption}[Canonical Markov family]
\label{ass:markov-class}
For each $(s,x)\in[0,\infty)\times E$, a probability measure $P^{s,x}$
on $\Omega$ is given. The family is Borel measurable in $(s,x)$, and
under $P^{s,x}$ the path is equal to $x$ on $[0,s]$ almost surely.
There is a transition function $P_{s,t}(x,dy)$, $0\le s\le t$, such that
$$
  P^{s,x}(X_t\in A)=P_{s,t}(x,A),
  \qquad A\in\Borel(E).
$$
For every $0\le s\le u\le t$ and $f\in B_b(E)$,
$$
  \E^{s,x}[f(X_t)\mid\F_u^0]=P_{u,t}f(X_u)
  \quad P^{s,x}\text{-a.s.},
  \qquad
  P_{s,t}f(x)=\int_E f(y)P_{s,t}(x,dy).
$$
For each $f\in B_b(E)$, the function $(s,t,x)\mapsto P_{s,t}f(x)$
is Borel measurable.

We also assume the following path-segment form of the Markov property.
For every bounded nonnegative Borel function $\varphi$ of the path on
$[u,t]$,
$$
  \E^{s,x}[\varphi((X_r)_{u\le r\le t})\mid\F_u^0]
  =\E^{u,X_u}[\varphi((X_r)_{u\le r\le t})]
  \quad P^{s,x}\text{-a.s.}
$$
For bounded Borel $v:[0,\infty)\times E\to[0,\infty)$ and
$f\in B_b^+(E)$, the function
$$
  U^v_{s,t}f(x)
  :=\E^{s,x}\left[
     \exp\left(-\int_s^t v(r,X_r)\,dr\right)f(X_t)
  \right]
$$
is Borel measurable in $(s,t,x)$.
For $\mu\in\mathcal P(E)$, set
$P_\mu=\int_E P^{0,x}\,\mu(dx)$.
\end{assumption}

Assumption~\ref{ass:markov-class} is in force throughout the general
theory. After fixing $\mu$, we use the $P_\mu$-usual augmentation
$(\F_t)$ of $(\F_t^0)$ for adapted processes and randomized stopping
times. The Markov identities under $P_\mu$ remain valid after this
augmentation. This convention permits changes on a common
$P_\mu$-null set. Densities are identified up to
$dt\otimes P_\mu$-null sets; when defining Borel measures, we use jointly
Borel representatives of completed-measurable densities. All pathwise
identities are understood on a common set of full $P_\mu$-measure.
The assumptions allow general Borel transition kernels and unbounded
stopping intensities.

We first prove the time-inhomogeneous Duhamel formula used below;
see also \cite{Pazy1983} for the general semigroup context.

\begin{lemma}
\label{lem:duhamel}
For bounded Borel $v\ge0$, $f\in B_b^+(E)$, and $0\le s\le t$,
$$
  P_{s,t}f(x)
  =U^v_{s,t}f(x)
   +\int_s^t P_{s,u}\bigl(v(u,\cdot)U^v_{u,t}f\bigr)(x)\,du.
$$
\end{lemma}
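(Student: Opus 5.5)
Our plan is to prove the identity pathwise under $P^{s,x}$ and then take expectations using the Markov property of Assumption~\ref{ass:markov-class}. Put $A_u:=\int_s^u v(r,X_r)\,dr$ for $u\in[s,t]$. Since $v$ is bounded and Borel and $X$ is c\`adl\`ag, the map $u\mapsto A_u$ is Lipschitz with derivative $v(u,X_u)$ for almost every $u$. The elementary identity
$$
  1=e^{-A_t}+\int_s^t v(u,X_u)\,e^{-(A_t-A_u)}\,du
$$
therefore holds for every path. To see this, differentiate $u\mapsto e^{-(A_t-A_u)}$; it is absolutely continuous, equals $e^{-A_t}$ at $u=s$ and $1$ at $u=t$.

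We multiply this identity by $f(X_t)\ge 0$ and take $\E^{s,x}$. The left-hand side becomes $P_{s,t}f(x)$ and the first term on the right becomes $U^v_{s,t}f(x)$. The integrand of the last term is nonnegative and bounded by $\|v\|_\infty\|f\|_\infty$, and it is jointly measurable in $(u,\omega)$ since $X$ is c\`adl\`ag and $v$ is Borel. Tonelli therefore lets us interchange expectation and $du$, which gives
$$
  \int_s^t \E^{s,x}\Bigl[v(u,X_u)\exp\Bigl(-\int_u^t v(r,X_r)\,dr\Bigr)f(X_t)\Bigr]\,du .
$$

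For fixed $u$, we condition on $\F_u^0$. The quantity $\varphi:=\exp(-\int_u^t v(r,X_r)\,dr)f(X_t)$ is a bounded nonnegative Borel functional of the path segment on $[u,t]$. By the path-segment Markov property,
$$
  \E^{s,x}[\varphi\mid\F_u^0]=\E^{u,X_u}[\varphi]=U^v_{u,t}f(X_u).
$$
Because $v(u,X_u)$ is $\F_u^0$-measurable, the integrand equals $\E^{s,x}[v(u,X_u)\,U^v_{u,t}f(X_u)]$. The function $y\mapsto v(u,y)U^v_{u,t}f(y)$ is bounded, nonnegative and Borel by the measurability clause of the assumption. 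The one-dimensional marginal $P^{s,x}(X_u\in\cdot)=P_{s,u}(x,\cdot)$ then turns the integrand into $P_{s,u}\bigl(v(u,\cdot)U^v_{u,t}f\bigr)(x)$, which is the claimed formula.

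The main obstacle is measurability bookkeeping rather than analysis. We need the integrand to be jointly measurable in $(u,\omega)$, which justifies Tonelli and the measurability of $u\mapsto P_{s,u}(\cdots)(x)$, and we need the conditional identity for almost every $u$. Both follow from the Borel measurability of $(u,t,y)\mapsto U^v_{u,t}f(y)$ assumed in Assumption~\ref{ass:markov-class}, together with progressive measurability of $X$. The only analytic point is the absolute continuity of $u\mapsto e^{-(A_t-A_u)}$, which is immediate from boundedness of $v$.
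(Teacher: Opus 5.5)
Your proof is correct and follows the paper's argument essentially step for step. Both start from the pathwise identity $1-e^{-A_t}=\int_s^t v(u,X_u)e^{-(A_t-A_u)}\,du$, multiply by $f(X_t)$ and apply Tonelli, then condition on $\F_u^0$ via the path-segment Markov property to get $U^v_{u,t}f(X_u)$, and finally use the one-dimensional marginal to obtain $P_{s,u}\bigl(v(u,\cdot)U^v_{u,t}f\bigr)(x)$. Your measurability remarks, such as joint measurability in $(u,\omega)$ from c\`adl\`ag paths, add detail the paper leaves implicit.
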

\begin{proof}
For each path,
$$
  1-\exp\left(-\int_s^t v(r,X_r)\,dr\right)
  =\int_s^t v(u,X_u)
       \exp\left(-\int_u^t v(r,X_r)\,dr\right)\,du.
$$
Multiply by $f(X_t)$ and integrate under $P^{s,x}$. Tonelli's theorem
gives
$$
\begin{aligned}
  P_{s,t}f(x)-U^v_{s,t}f(x)
  =\int_s^t\E^{s,x}\left[
     v(u,X_u)\exp\left(-\int_u^t v(r,X_r)\,dr\right)f(X_t)
  \right]du.
\end{aligned}
$$
For fixed $u$, the factor after $v(u,X_u)$ depends only on the path on
$[u,t]$. Its conditional expectation given $\F_u^0$ equals
$U^v_{u,t}f(X_u)$ by Assumption~\ref{ass:markov-class}.
Since $v(u,X_u)$ is $\F_u^0$-measurable, the integrand equals
$$
  \E^{s,x}[v(u,X_u)U^v_{u,t}f(X_u)]
  =P_{s,u}\bigl(v(u,\cdot)U^v_{u,t}f\bigr)(x).
$$
This proves the formula.
\end{proof}

\begin{definition}
\label{def:rst}
A \emph{randomized stopping time} relative to $P_\mu$ is a probability
measure $\xi$ on $\Omega\times[0,\infty)$ whose first marginal is
$P_\mu$ and which has a disintegration
$$
  \xi(d\omega,dt)=\xi^\omega(dt)P_\mu(d\omega)
$$
such that $S_t^\xi(\omega):=1-\xi^\omega([0,t])$ is
$\F_t$-measurable for every $t\ge0$.
The class of these measures is denoted by $\RST$.
The \emph{observed measure} is $\pi_\xi:=(t,X_t)_\#\xi$ on
$[0,\infty)\times E$.
\end{definition}

The kernel $\xi^\omega$ is the conditional law of the stopping time given
the full path. Adaptedness of $S_t^\xi$ means that the surviving mass at
time $t$ is determined by the information available at that time.
All stopping times in $\RST$ are finite almost surely.

We need a survival identity that also covers non-locally integrable
intensities.

\begin{lemma}
\label{lem:deterministic-survival-identity}
Let $a:[0,\infty)\to[0,\infty)$ be a finite-valued measurable function,
and set
$$
  A_t=\int_0^t a_s\,ds,\qquad S_t=e^{-A_t},\qquad q_t=S_ta_t,
$$
with $e^{-\infty}=0$.
If $a\in L^1_{\mathrm{loc}}$, then for every $t\ge0$,
$$
  S_t=1-\int_0^t q_s\,ds.
$$
For arbitrary finite-valued measurable $a$, one always has
$\int_0^\infty q_s\,ds\le1$. If this integral equals $1$, the survival
identity holds for every $t\ge0$.
\end{lemma}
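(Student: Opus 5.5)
The plan is to reduce everything to the absolutely continuous case by splitting time at the first point where $A$ becomes infinite. Set
$$
  \tau=\inf\{t\ge0: A_t=\infty\}\in[0,\infty],
$$
where $A_t\in[0,\infty]$ is nondecreasing. On $[0,\tau)$ the function $A$ is finite. Since $a\ge0$, finiteness of $A_t$ means $a\in L^1([0,t])$. Hence $A$ is absolutely continuous on every compact subinterval of $[0,\tau)$ and satisfies $A'_t=a_t$ for almost every $t<\tau$. For $t>\tau$ we have $A_t=\infty$, so $S_t=0$ and $q_t=0\cdot a_t=0$, because $a$ is finite-valued. The single point $t=\tau$ is Lebesgue-null and can be ignored in all integrals.

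\textbf{Locally integrable case.} Here $\tau=\infty$. For each $t$ the map $s\mapsto A_s$ is absolutely continuous on $[0,t]$. Composing with the Lipschitz function $x\mapsto e^{-x}$ on $[0,\infty)$ keeps $S$ absolutely continuous, and the chain rule gives $\frac{d}{ds}S_s=-S_sa_s=-q_s$ almost everywhere. Integrating from $0$ to $t$ and using $S_0=1$ yields $S_t=1-\int_0^tq_s\,ds$.

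\textbf{General case.} For $t<\tau$, the same argument applied on $[0,t]$ gives $S_t=1-\int_0^tq_s\,ds$. Since $S_t\ge0$, this already shows $\int_0^tq_s\,ds\le1$ for every $t<\tau$. Letting $t\uparrow\tau$ and using monotone convergence gives $\int_0^\tau q_s\,ds\le1$. Because $q=0$ on $(\tau,\infty)$, this means $\int_0^\infty q_s\,ds=\int_0^\tau q_s\,ds\le1$. If $\tau=0$, the integral is simply $0$.

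\textbf{The case $\int_0^\infty q=1$.} The survival identity already holds for every $t<\tau$. For $t\ge\tau$ (with $\tau<\infty$) the key step is to show that $S_t=0$. This needs care at $t=\tau$: at that point $A_\tau$ may be finite, because $A$ can jump to $\infty$ only immediately after $\tau$.

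1. For $t>\tau$: $S_t=0$ and $\int_0^tq\,ds=\int_0^\infty q\,ds=1$, so both sides of the identity are $0$.

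2. For $t=\tau$: the integral $\int_0^\tau q\,ds=1$, since $q$ vanishes after $\tau$. I also need $S_\tau=0$. Taking limits $t\uparrow\tau$ in the identity gives $S_{\tau-}=1-1=0$. If $A_\tau<\infty$, then $a\in L^1([0,\tau])$, so $A$ is continuous at $\tau$ from the left. Then $S_\tau=S_{\tau-}=0$, which contradicts $A_\tau<\infty$. Hence $A_\tau=\infty$ and $S_\tau=0$, as required.

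If instead $\tau=\infty$, the identity holds for all $t$ by the first case.

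The main obstacle is exactly this boundary point $t=\tau$. There one must show that total mass $1$ forces $A_\tau=\infty$. The left-continuity argument in step 2 above handles it.
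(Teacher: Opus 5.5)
Your proof is correct, but it takes a different route from the paper's. The paper never locates the point where $A$ explodes. It truncates to $a^n=a\wedge n$ and applies the locally integrable case to get $S^n_t=1-\int_0^tq^n_s\,ds$. It then uses $q^n\to q$ pointwise (this is where finiteness of $a$ enters) and Fatou's lemma to get $\int_0^\infty q_s\,ds\le1$. When this integral equals $1$, it upgrades the convergence to $q^n\to q$ in $L^1$ by a Scheff\'e-type argument. Letting $n\to\infty$ with $S^n_t\downarrow S_t$ then gives the identity at every $t$ at once, with no boundary case to treat.

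You instead split at $\tau=\inf\{t:A_t=\infty\}$ and use only the chain rule on $[0,\tau)$ plus monotone convergence. This is more elementary and more informative. It shows that the identity always holds on $[0,\tau)$, and that the mass deficit is exactly $1-\int_0^\infty q_s\,ds=e^{-A_\tau}$, with $A_\infty=\int_0^\infty a_s\,ds$. So for $\tau<\infty$ the identity breaks down on $(\tau,\infty)$ precisely when $A_\tau<\infty$, i.e.\ when $a$ fails to be integrable on every interval $(\tau,\tau+\varepsilon)$. The paper's truncation avoids this case analysis, and it is the same $\lambda\wedge n$ device the paper uses later in Lemma~\ref{lem:balance-uniqueness}.

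A minor simplification of your step 2: once $A_t\to\infty$ as $t\uparrow\tau$, monotonicity alone gives $A_\tau\ge\sup_{t<\tau}A_t=\infty$. The detour through left-continuity and a contradiction is therefore unnecessary.
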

\begin{proof}
In the locally integrable case, $A$ and $S$ are absolutely continuous on
each finite interval and $S'_t=-S_ta_t$ almost everywhere. Integration
gives the identity.

For general $a$, let $a^n=a\wedge n$, $S_t^n=e^{-\int_0^t a_s^n ds}$,
and $q_t^n=S_t^na_t^n$. Then $S_t^n\downarrow S_t$ and
$q_t^n\to q_t$ for each $t$, because $a_t$ is finite.
The first part gives $\int_0^\infty q_t^n\,dt\le1$.
Fatou's lemma therefore gives $\int_0^\infty q_t\,dt\le1$.
If the latter integral equals $1$, Fatou's lemma also implies
$\int q^n\to1$. Since $\min(q^n,q)\to q$ and $\min(q^n,q)\le q$,
$$
  \int_0^\infty|q_t^n-q_t|\,dt
  =\int_0^\infty q_t^n\,dt+1
    -2\int_0^\infty\min(q_t^n,q_t)\,dt\longrightarrow0.
$$
Passing to the limit in $S_t^n=1-\int_0^t q_s^n\,ds$ proves the result.
\end{proof}

\begin{definition}
\label{def:intensity-generated-rst}
Let $\alpha$ be a finite-valued nonnegative progressively measurable
process, and write
$$
  A_t^\alpha=\int_0^t\alpha_s\,ds,\qquad
  S_t^\alpha=e^{-A_t^\alpha},\qquad
  q_t^\alpha=S_t^\alpha\alpha_t.
$$
The value $A_t^\alpha=\infty$ is allowed, in which case $S_t^\alpha=0$.
We call $\alpha$ an \emph{admissible intensity} if
$$
  S_t^\alpha=1-\int_0^t q_s^\alpha\,ds
  \quad(t\ge0),\qquad
  \int_0^\infty q_t^\alpha\,dt=1
  \quad P_\mu\text{-a.s.}
$$
The corresponding intensity-generated randomized stopping time is
$$
  \xi_\alpha(d\omega,dt)=q_t^\alpha(\omega)\,dt\,P_\mu(d\omega).
$$
\end{definition}

\begin{definition}
\label{def:markov-rst}
A randomized stopping time $\xi\in\RST$ is called \emph{Markovian} if
there is a finite-valued Borel function
$\lambda:[0,\infty)\times E\to[0,\infty)$ such that
$\alpha_t=\lambda(t,X_t)$ is admissible and $\xi=\xi_\alpha$.
We then write $\xi=\xi_\lambda$.
\end{definition}

\begin{lemma}
\label{lem:intensity-generated-rst-correct}
If $\alpha$ is admissible, then $\xi_\alpha$ is well defined and belongs
to $\RST$. Moreover, for every $t\ge0$,
$$
  \xi_\alpha^\omega([0,t])
  =\int_0^t q_s^\alpha(\omega)\,ds
  =1-S_t^\alpha(\omega)
  \quad P_\mu\text{-a.s.}
$$
\end{lemma}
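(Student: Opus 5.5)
The plan is to check the requirements of Definition~\ref{def:rst} one at a time, using only the defining identities of admissibility in Definition~\ref{def:intensity-generated-rst}.

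\emph{Well-definedness as a measure.} The process $\alpha$ is progressively measurable, hence jointly measurable in $(\omega,t)$. The same holds for $A_t^\alpha=\int_0^t\alpha_s\,ds\in[0,\infty]$ by Tonelli's theorem, and therefore for $S^\alpha$ and $q^\alpha=S^\alpha\alpha$. So $q_t^\alpha(\omega)\,dt\,P_\mu(d\omega)$ defines a nonnegative measure on $\Omega\times[0,\infty)$. Its total mass is $\E_{P_\mu}\int_0^\infty q_t^\alpha\,dt=1$, because the inner integral equals $1$ $P_\mu$-a.s. by admissibility. On the exceptional null set $N$ where the inner integral differs from $1$, I replace the kernel by $\delta_0$; this does not change $\xi_\alpha$. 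The disintegration is then $\xi_\alpha^\omega(dt)=q_t^\alpha(\omega)\,dt$ for $\omega\notin N$. Since each $\xi_\alpha^\omega$ is a probability measure, the first marginal of $\xi_\alpha$ is $P_\mu$.

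\emph{The distribution-function identity.} For $\omega\notin N$ and every $t\ge0$, we have $\xi_\alpha^\omega([0,t])=\int_0^t q_s^\alpha(\omega)\,ds$. By the first admissibility identity this equals $1-S_t^\alpha(\omega)$. One subtlety is that admissibility gives the identity ``$(t\ge0)$ $P_\mu$-a.s.'' I read this as holding for all $t$ simultaneously on one null set. If instead it holds for each fixed $t$ separately, I would take a countable dense set of $t$ and extend to all $t$ by continuity. This works because $t\mapsto\int_0^t q^\alpha$ is continuous, and $t\mapsto S^\alpha_t$ is nonincreasing and continuous where $A^\alpha$ is finite. Alternatively, Lemma~\ref{lem:deterministic-survival-identity} applied pathwise with $\int_0^\infty q^\alpha=1$ gives the identity for all $t$ directly. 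I would use that lemma, since it avoids the issue entirely.

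\emph{Adaptedness.} It remains to show that $S_t^{\xi_\alpha}=1-\xi_\alpha^\omega([0,t])=S_t^\alpha$ off $N$ is $\F_t$-measurable. Progressive measurability of $\alpha$ means that $(s,\omega)\mapsto\alpha_s(\omega)$ on $[0,t]\times\Omega$ is $\Borel([0,t])\otimes\F_t$-measurable. Tonelli's theorem then gives that $A_t^\alpha$ is $\F_t$-measurable as a $[0,\infty]$-valued variable, and hence so is $S_t^\alpha=e^{-A_t^\alpha}$. Since $(\F_t)$ is the $P_\mu$-augmentation, modifying on the null set $N$ preserves $\F_t$-measurability.

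The main, though mild, obstacle is the bookkeeping of null sets: the identity must hold for all $t$ simultaneously on a single full-measure set. Lemma~\ref{lem:deterministic-survival-identity} and the completeness of the augmented filtration settle this.
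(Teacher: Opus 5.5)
Your proposal is correct and follows the same route as the paper. You use joint measurability of $q^\alpha$ from progressive measurability, the normalization giving a probability measure with first marginal $P_\mu$ and conditional density $q^\alpha$, the distribution-function identity from admissibility, and $\F_t$-measurability of $S_t^\alpha$ via Tonelli on $[0,t]\times\Omega$. Your extra null-set bookkeeping simply spells out what the paper's short proof leaves implicit: redefining the kernel on $N$, using completeness of the augmented filtration, and obtaining the identity for all $t$ at once from Lemma~\ref{lem:deterministic-survival-identity}.
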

\begin{proof}
Progressive measurability implies joint measurability of $q^\alpha$.
The normalization gives a probability measure with first marginal
$P_\mu$. Its conditional time density is $q^\alpha$. The displayed
identity follows from admissibility. Since $A_t^\alpha$ and
$S_t^\alpha$ are $\F_t$-measurable, $\xi_\alpha$ belongs to $\RST$.
\end{proof}

\begin{definition}
\label{def:unstopped-mass-flow}
For an admissible intensity $\alpha$, define the \emph{surviving mass
measure} $L_t^\alpha$ on $E$, its time integral $\overline L^\alpha$
(the \emph{surviving occupation measure}), and the observed measure
$\pi_\alpha$ by
$$
  L_t^\alpha(f)=\E_{P_\mu}[S_t^\alpha f(X_t)],\qquad
  \overline L^\alpha(dt,dx)=dt\,L_t^\alpha(dx),
$$
$$
  \pi_\alpha(dt,A)
  =\E_{P_\mu}[q_t^\alpha\1_{\{X_t\in A\}}]\,dt,
  \qquad A\in\Borel(E).
$$
Thus $\pi_\alpha=(t,X_t)_\#\xi_\alpha$.
\end{definition}

\begin{lemma}
\label{lem:unstopped-mass-balance}
For every admissible progressively measurable intensity $\alpha$,
$t\ge0$, and $f\in B_b^+(E)$,
$$
  L_t^\alpha(f)
  =\mu P_{0,t}f
   -\int_{[0,t]\times E}P_{s,t}f(x)\,\pi_\alpha(ds,dx).
$$
\end{lemma}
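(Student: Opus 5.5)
We compute $\E_{P_\mu}[S_t^\alpha f(X_t)]$ directly from admissibility and the Markov property, without any approximation of $\alpha$. Fix $t\ge0$ and $f\in B_b^+(E)$. By admissibility (Definition~\ref{def:intensity-generated-rst}), $S_t^\alpha=1-\int_0^t q_s^\alpha\,ds$ holds $P_\mu$-a.s. Multiplying by $f(X_t)$ gives
$$
  S_t^\alpha f(X_t)=f(X_t)-\int_0^t q_s^\alpha f(X_t)\,ds .
$$
All terms are nonnegative and bounded by $\|f\|_\infty$ times an integrable quantity, since $\int_0^\infty q^\alpha\,ds=1$. We may therefore take expectations and use Tonelli's theorem:
$$
  L_t^\alpha(f)=\E_{P_\mu}[f(X_t)]-\int_0^t\E_{P_\mu}\bigl[q_s^\alpha f(X_t)\bigr]\,ds .
$$
The first term equals $\mu P_{0,t}f$, by the definition of $P_\mu$ and the transition function in Assumption~\ref{ass:markov-class}.

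For the second term, fix $s\in[0,t]$. The variable $q_s^\alpha=S_s^\alpha\alpha_s$ is $\F_s$-measurable, because $\alpha$ is progressively measurable and hence $A_s^\alpha$ is $\F_s$-measurable. The Markov property, valid under $P_\mu$ after the usual augmentation as stated after Assumption~\ref{ass:markov-class}, gives $\E_{P_\mu}[f(X_t)\mid\F_s]=P_{s,t}f(X_s)$. Conditioning on $\F_s$ for nonnegative variables (no integrability of $q_s^\alpha$ is needed) yields
$$
  \E_{P_\mu}[q_s^\alpha f(X_t)]=\E_{P_\mu}[q_s^\alpha P_{s,t}f(X_s)] .
$$
Integrating in $s$ over $[0,t]$ and recalling that $\pi_\alpha(ds,dx)=\E_{P_\mu}[q_s^\alpha\1_{\{X_s\in dx\}}]\,ds$ (Definition~\ref{def:unstopped-mass-flow}), we get
$$
  \int_0^t\E_{P_\mu}[q_s^\alpha P_{s,t}f(X_s)]\,ds=\int_{[0,t]\times E}P_{s,t}f(x)\,\pi_\alpha(ds,dx).
$$
This identification requires $(s,x)\mapsto P_{s,t}f(x)$ to be Borel, which is part of Assumption~\ref{ass:markov-class}. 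Substituting back gives the claimed formula.

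The main point of care is the measure-theoretic bookkeeping. First, the conditional-expectation identity is used for a.e.\ $s$ inside a $ds$-integral, so we need joint measurability of $(s,\omega)\mapsto q_s^\alpha(\omega)P_{s,t}f(X_s(\omega))$. This follows from progressive measurability together with the Borel measurability of $P_{s,t}f$. Second, the Markov property must hold relative to the augmented filtration $(\F_s)$ rather than $(\F_s^0)$. The paper asserts this is preserved; if needed, we would replace $q_s^\alpha$ by an $\F_s^0$-measurable version agreeing $P_\mu$-a.s. The possibility $A^\alpha_t=\infty$ causes no trouble, since the survival identity is built into admissibility and no absolute continuity of $S^\alpha$ is used.
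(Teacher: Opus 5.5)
Your proposal is correct and follows essentially the same route as the paper. Both arguments multiply the admissibility identity $S_t^\alpha=1-\int_0^t q_s^\alpha\,ds$ by $f(X_t)$, take expectations using Tonelli, and then use the Markov property together with $\F_s$-measurability of $q_s^\alpha$ to identify the remaining integral with $\int_{[0,t]\times E}P_{s,t}f\,d\pi_\alpha$. Your extra remarks on joint measurability and on conditioning the nonnegative, possibly non-integrable $q_s^\alpha$ only make explicit details the paper leaves implicit.
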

\begin{proof}
Admissibility gives
$$
  \E_{P_\mu}[f(X_t)]
  =\E_{P_\mu}[S_t^\alpha f(X_t)]
   +\int_0^t\E_{P_\mu}[q_s^\alpha f(X_t)]\,ds.
$$
By the Markov property and $\F_s$-measurability of $q_s^\alpha$,
$$
  \int_0^t\E_{P_\mu}[q_s^\alpha f(X_t)]\,ds
  =\int_{[0,t]\times E}P_{s,t}f(x)\,\pi_\alpha(ds,dx).
$$
All integrals are finite, since $f$ is bounded and
$\int_0^\infty q_s^\alpha\,ds=1$ almost surely.
Finally, $\E_{P_\mu}[f(X_t)]=\mu P_{0,t}f$.
\end{proof}

\begin{lemma}
\label{lem:balance-uniqueness}
Let $\lambda:[0,\infty)\times E\to[0,\infty)$ be Borel measurable.
Let $t\mapsto L_t$ be a Borel kernel of finite nonnegative measures on
$E$. Suppose that, for each $T<\infty$,
$$
  \int_0^T L_s(\lambda(s,\cdot))\,ds<\infty,
$$
and that, for every $t\ge0$ and $f\in B_b^+(E)$,
$$
  L_t(f)=\mu P_{0,t}f
    -\int_0^t L_s\bigl(\lambda(s,\cdot)P_{s,t}f\bigr)\,ds.
$$
Then
$$
  L_t(f)=\E_{P_\mu}[S_t^\lambda f(X_t)],\qquad
  S_t^\lambda=\exp\left(-\int_0^t\lambda(s,X_s)\,ds\right),
$$
with $e^{-\infty}=0$.
\end{lemma}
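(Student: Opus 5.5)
The plan is to avoid a direct uniqueness argument for the Volterra-type equation. That would require controlling signed differences of solutions against a possibly unbounded $\lambda$. Instead, I would compare $L_t$ with the killed semigroup $U^v$ for \emph{bounded} $v$, using Lemma~\ref{lem:duhamel} twice, and then let $v=\lambda\wedge n\uparrow\lambda$.

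\emph{Step 1 (perturbation identity).} Fix $t$, $f\in B_b^+(E)$ and a bounded Borel $v\ge0$. I will show
$$
  L_t(f)=\mu U^v_{0,t}f+\int_0^t L_s\bigl((v-\lambda)(s,\cdot)\,U^v_{s,t}f\bigr)\,ds .
$$
The argument runs as follows.
\begin{itemize}
\item Start from Lemma~\ref{lem:duhamel} at $s=0$, integrated against $\mu$:
$\mu P_{0,t}f=\mu U^v_{0,t}f+\int_0^t\mu P_{0,u}(g_u)\,du$, where $g_u=v(u,\cdot)U^v_{u,t}f\in B_b^+(E)$.
\item In each term $\mu P_{0,u}(g_u)$, substitute the hypothesised balance equation at time $u$ with test function $g_u$. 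This gives $\mu P_{0,u}g_u=L_u(g_u)+\int_0^u L_s(\lambda(s,\cdot)P_{s,u}g_u)\,ds$.
\item In the resulting double integral, exchange the order of $ds$ and $du$ and move the $du$-integral inside $L_s$. The inner integral becomes $\int_s^t P_{s,u}(v(u,\cdot)U^v_{u,t}f)\,du$, which equals $P_{s,t}f-U^v_{s,t}f$ by Lemma~\ref{lem:duhamel} started at $s$.
\item Comparing with the balance equation at time $t$ cancels the terms $\int_0^t L_s(\lambda P_{s,t}f)\,ds$ and leaves the displayed identity.
\end{itemize}

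\emph{Step 2 (justifying the interchanges).} This is where I expect the main care to be needed. Every integrand is nonnegative and bounded by $\|f\|_\infty$ times either $v$ or $\lambda$. The relevant totals are finite: $\int_0^t L_s(\lambda(s,\cdot))\,ds<\infty$ by hypothesis, and $L_s(E)\le \mu P_{0,s}1=1$ from the balance equation with $f=1$. So Tonelli applies, and the final subtraction takes place between finite quantities. For joint measurability in $(s,u)$ of $L_s(\lambda(s,\cdot)P_{s,u}g_u)$, I would use the Borel measurability of $(s,t,x)\mapsto P_{s,t}f(x)$ and of $U^v_{s,t}f(x)$ from Assumption~\ref{ass:markov-class}, together with the fact that $L$ is a Borel kernel. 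A monotone class argument then extends measurability from products to jointly Borel integrands. Interchanging $\int_s^t\cdots du$ with $P_{s,u}$ and with $L_s$ is again Tonelli, since everything is nonnegative.

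\emph{Step 3 (limit $n\to\infty$).} Apply Step~1 with $v=\lambda\wedge n$. The remainder satisfies
$$
  \Bigl|\int_0^t L_s\bigl((\lambda\wedge n-\lambda)U^{\lambda\wedge n}_{s,t}f\bigr)ds\Bigr|
  \le\|f\|_\infty\int_0^t L_s\bigl(\lambda\1_{\{\lambda>n\}}(s,\cdot)\bigr)ds ,
$$
which tends to $0$ by dominated convergence, using the integrability hypothesis. Meanwhile $\exp(-\int_0^t(\lambda\wedge n)(r,X_r)\,dr)$ decreases to $S_t^\lambda$ pathwise, with the convention $e^{-\infty}=0$ covering non-integrable paths. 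By bounded monotone convergence, $\mu U^{\lambda\wedge n}_{0,t}f=\E_{P_\mu}[e^{-\int_0^t(\lambda\wedge n)}f(X_t)]\to\E_{P_\mu}[S_t^\lambda f(X_t)]$. Hence $L_t(f)=\E_{P_\mu}[S_t^\lambda f(X_t)]$.
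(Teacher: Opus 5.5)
Your proposal is correct and matches the paper's proof in all essentials. The paper also truncates to $\lambda_n=\lambda\wedge n$ and combines Lemma~\ref{lem:duhamel} with the balance equation at the intermediate time $u$, applied to $h_u^n=\lambda_n(u,\cdot)U^n_{u,t}f$. It interchanges the integrals over the triangle $0\le s\le u\le t$, using the bound $n\|f\|_\infty L_s(\lambda(s,\cdot))$, to reach your identity $L_t(f)=\mu U^n_{0,t}f-\int_0^t L_s\bigl((\lambda-\lambda_n)(s,\cdot)U^n_{s,t}f\bigr)\,ds$, and then passes to the limit by dominated convergence. The remaining differences are cosmetic: you state the identity for a general bounded $v$, order the substitutions slightly differently, and make explicit the bound $L_s(E)\le 1$, which the paper leaves implicit.
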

\begin{proof}
Fix $t\ge0$ and $f\in B_b^+(E)$. Set $\lambda_n=\lambda\wedge n$ and
$$
  U^n_{s,t}f(x)
  =\E^{s,x}\left[
     \exp\left(-\int_s^t\lambda_n(u,X_u)\,du\right)f(X_t)
   \right].
$$
Lemma~\ref{lem:duhamel} gives
$$
  P_{s,t}f
  =U^n_{s,t}f
   +\int_s^t P_{s,u}\bigl(\lambda_n(u,\cdot)U^n_{u,t}f\bigr)\,du.
$$
The functions in this formula are Borel measurable by
Assumption~\ref{ass:markov-class} and the monotone class theorem for
Borel kernels. Substitution into the balance equation yields
$$
\begin{aligned}
  L_t(f)
  &=\mu U^n_{0,t}f
    +\int_0^t\mu P_{0,u}
        \bigl(\lambda_n(u,\cdot)U^n_{u,t}f\bigr)\,du\\
  &\quad-\int_0^t L_s\bigl(\lambda(s,\cdot)U^n_{s,t}f\bigr)\,ds\\
  &\quad-\int_0^t\int_s^t
       L_s\Bigl(\lambda(s,\cdot)P_{s,u}
          \bigl(\lambda_n(u,\cdot)U^n_{u,t}f\bigr)\Bigr)\,du\,ds.
\end{aligned}
$$
Put $h_u^n(x)=\lambda_n(u,x)U^n_{u,t}f(x)$. Since
$0\le h_u^n\le n\|f\|_\infty$,
$$
  L_s\bigl(\lambda(s,\cdot)P_{s,u}h_u^n\bigr)
  \le n\|f\|_\infty L_s(\lambda(s,\cdot)).
$$
The right-hand side is integrable over the triangle $0\le s\le u\le t$.
We may therefore interchange the order of integration. Applying the
balance equation at time $u$ to $h_u^n$, we obtain
$$
  \mu P_{0,u}h_u^n
  =L_u(h_u^n)+\int_0^u
       L_s\bigl(\lambda(s,\cdot)P_{s,u}h_u^n\bigr)\,ds.
$$
Integrating in $u$ cancels the double integrals and gives
$$
  L_t(f)=\mu U^n_{0,t}f
    -\int_0^t L_s\bigl((\lambda-\lambda_n)(s,\cdot)
                          U^n_{s,t}f\bigr)\,ds.
$$
Dominated convergence under $P^{0,x}$ and then under $\mu$ gives
$$
  \mu U^n_{0,t}f\longrightarrow
  \E_{P_\mu}[S_t^\lambda f(X_t)].
$$
Also,
$$
  0\le L_s\bigl((\lambda-\lambda_n)(s,\cdot)U^n_{s,t}f\bigr)
   \le\|f\|_\infty L_s((\lambda-\lambda_n)(s,\cdot))
    \le\|f\|_\infty L_s(\lambda(s,\cdot)).
$$
For almost every $s$, $L_s$ integrates $\lambda(s,\cdot)$, so the
middle expression tends to zero. Dominated convergence in $s$ now
shows that the remaining integral vanishes in the limit.
\end{proof}

\begin{assumption}[Reference intensity]
\label{ass:reference-rate}
The Borel function $r:[0,\infty)\times E\to(0,\infty)$ satisfies
$$
  \int_0^T r(t,X_t)\,dt<\infty
  \quad P_\mu\text{-a.s. for every }T<\infty,
  \qquad
  \int_0^\infty r(t,X_t)\,dt=\infty
  \quad P_\mu\text{-a.s.}
$$
\end{assumption}

This assumption is imposed on all entropy statements below.
It implies that $r(t,X_t)$ is admissible and defines $\xi_r$.
For finite $t$, its survival function is strictly positive.

For probability measures $Q$ and $R$, we use
$$
  \KL(Q\|R)=\int\log\frac{dQ}{dR}\,dQ
$$
when $Q\ll R$, and set $\KL(Q\|R)=\infty$ otherwise. Equivalently,
if $h(u)=u\log u-u+1$, then
$\KL(Q\|R)=\int h(dQ/dR)\,dR$ when $Q\ll R$.
Here and below $0\log0=0$.
For $a>0$ and $z\ge0$, define
$$
  \ell_a(z)=z\log\frac za-z+a.
$$
Set $\ell_0(0)=0$ and $\ell_0(z)=\infty$ for $z>0$.
An expression $S\ell$ is interpreted as the integral of $\ell$ against
the measure with density $S$; in particular, its value is zero where
$S=0$, including when $\ell=\infty$.

\begin{proposition}
\label{prop:entropy-intensity}
Under Assumption~\ref{ass:reference-rate}, every admissible
progressively measurable intensity $\alpha$ satisfies
$$
  \KL(\xi_\alpha\|\xi_r)
  =\E_{P_\mu}\int_0^\infty
       S_t^\alpha\ell_{r(t,X_t)}(\alpha_t)\,dt.
$$
The equality holds in $[0,\infty]$.
\end{proposition}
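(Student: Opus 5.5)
Both $\xi_\alpha$ and $\xi_r$ have the same first marginal $P_\mu$ and time densities $q^\alpha_t$ and $q^r_t=S^r_t r(t,X_t)$ with respect to $dt\,P_\mu(d\omega)$. The plan is therefore to compute the Radon--Nikodym derivative pathwise, take its logarithm, and then simplify the compensator part of the logarithm using admissibility.

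\emph{Absolute continuity and the density.} By Assumption~\ref{ass:reference-rate}, $r(t,X_t)>0$ and $S^r_t>0$ for all finite $t$, $P_\mu$-a.s. Hence $q^r>0$ almost everywhere on $\Omega\times[0,\infty)$, and $\xi_\alpha\ll\xi_r$ always holds, with
$$
  \frac{d\xi_\alpha}{d\xi_r}(\omega,t)
  =\frac{S^\alpha_t\alpha_t}{S^r_t r(t,X_t)}.
$$
Where $S^\alpha_t>0$, the value $A^\alpha_t$ is finite, so the logarithm equals $\log(\alpha_t/r_t)-A^\alpha_t+A^r_t$, where $r_t:=r(t,X_t)$. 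The set $\{S^\alpha_t=0\}$ is $\xi_\alpha$-null and contributes nothing.

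\emph{Computing the entropy.} Write $h(u)=u\log u-u+1\ge0$, so that $\KL=\int h(d\xi_\alpha/d\xi_r)\,d\xi_r$ is an integral of a nonnegative function. The pointwise algebra
\[
  S^\alpha_t\ell_{r_t}(\alpha_t)
  =q^\alpha_t\log\tfrac{\alpha_t}{r_t}-q^\alpha_t+S^\alpha_t r_t
\]
suggests a pathwise identity. For $P_\mu$-a.e.\ $\omega$ one should have
$$
  \int_0^\infty h\Bigl(\tfrac{q^\alpha_t}{q^r_t}\Bigr)q^r_t\,dt
  =\int_0^\infty S^\alpha_t\ell_{r_t}(\alpha_t)\,dt,
$$
and integrating this in $P_\mu$ gives the claim, since both sides are nonnegative. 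The $-u+1$ part of $h$ contributes $-\int q^\alpha+\int q^r=0$, by admissibility of $\alpha$ and of $r$. It then remains to show
\[
  \int q^\alpha_t\bigl(\log\tfrac{\alpha_t}{r_t}-A^\alpha_t+A^r_t\bigr)\,dt
  =\int\bigl(q^\alpha_t\log\tfrac{\alpha_t}{r_t}-q^\alpha_t+S^\alpha_t r_t\bigr)\,dt .
\]
Two deterministic identities are needed for this: $\int_0^\infty q^\alpha_t A^\alpha_t\,dt=1$ (the exponential law, i.e.\ $\int_0^\infty A\,d(1-e^{-A})=1$), and $\int_0^\infty q^\alpha_tA^r_t\,dt=\int_0^\infty S^\alpha_t r_t\,dt$. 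The second follows from Fubini, $\int q^\alpha_t\int_0^t r_s\,ds\,dt=\int r_s\int_s^\infty q^\alpha_t\,dt\,ds$, together with the survival identity $\int_s^\infty q^\alpha_t\,dt=S^\alpha_s$, which comes from admissibility and Lemma~\ref{lem:deterministic-survival-identity}. All terms involved are nonnegative, so Tonelli applies.

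\emph{Main obstacle.} The difficulty is justifying the splitting of the logarithm, because $q^\alpha\log(\alpha/r)$ may be of either sign and non-integrable, so that formally $\infty-\infty$ appears. To avoid this, I would not split the logarithm. Instead I would regroup into nonnegative pieces: the integrand is $q^r h(q^\alpha/q^r)$, and the target integrand is $S^\alpha r\,h(\alpha/r)$. Using $q^\alpha/q^r=(\alpha/r)e^{-(A^\alpha-A^r)}$, one can write
\[
  q^r h\Bigl(\tfrac{q^\alpha}{q^r}\Bigr)
  =S^\alpha r\,h\Bigl(\tfrac{\alpha}{r}\Bigr)
   +q^\alpha\bigl(A^r-A^\alpha\bigr)
   +\bigl(q^r-q^\alpha\bigr)+\bigl(q^\alpha-S^\alpha r\bigr).
\]
The remaining terms are not all of one sign, but their total time integral is $0$ by the two identities above, and each is separately absolutely integrable in $t$ for fixed $\omega$. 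This holds because $\int q^\alpha A^\alpha=1$, $\int q^\alpha A^r=\int S^\alpha r$, and $\int q^r=\int q^\alpha=1$. The only issue is $\int S^\alpha r<\infty$. When this integral is infinite, the right side of the pathwise identity is infinite, since $\ell_a(z)\ge a$ when $z\le a/e$. I would check that the left side is then infinite as well, by truncating the time horizon at $T$, performing the computation with finite quantities, and letting $T\to\infty$ by monotone convergence. Integrating the resulting pathwise identity in $P_\mu$ with Tonelli finishes the proof.
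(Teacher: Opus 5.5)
Your argument is correct on paths with $\int_0^\infty S^\alpha_t r_t\,dt<\infty$. There the remainder $q^\alpha(A^r-A^\alpha)+q^r-S^\alpha r$ is integrable with zero integral. Since $q^r h(q^\alpha/q^r)$ and $S^\alpha r\,h(\alpha/r)$ are both nonnegative, the pathwise identity follows in $[0,\infty]$ without any limit.

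\textbf{The gap.} The complementary case $\int_0^\infty S^\alpha_t r_t\,dt=\infty$ is exactly where the $\infty-\infty$ difficulty sits, and your plan of truncating at $T$ and applying monotone convergence does not settle it. Carry out the finite-horizon computation, using
$\int_0^T q^\alpha A^r\,dt=\int_0^T S^\alpha r\,dt-S^\alpha_TA^r_T$,
$\int_0^T q^\alpha A^\alpha\,dt=1-S^\alpha_T-S^\alpha_TA^\alpha_T$ and
$\int_0^T q^r\,dt=1-S^r_T$. You obtain
\[
  \int_0^T q^r_t\,h\Bigl(\frac{q^\alpha_t}{q^r_t}\Bigr)dt
  +S^r_T\,h\Bigl(\frac{S^\alpha_T}{S^r_T}\Bigr)
  =\int_0^T S^\alpha_t\,\ell_{r_t}(\alpha_t)\,dt .
\]
The boundary term is nonnegative and lies on the entropy side. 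Monotone convergence of the two integrals therefore only gives $H\le\int_0^\infty S^\alpha\ell_r(\alpha)\,dt$, which says nothing in this case. To get $H=\infty$ you must show the boundary term does not absorb the divergence. It contains $S^\alpha_TA^r_T$, and nothing in your plan controls that quantity.

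The missing ingredient is the Jensen (log-sum) bound on the tail,
\[
  S^r_T\,h\Bigl(\frac{S^\alpha_T}{S^r_T}\Bigr)
  \le\int_T^\infty q^r_t\,h\Bigl(\frac{q^\alpha_t}{q^r_t}\Bigr)dt ,
\]
which holds because $\int_T^\infty q^\alpha\,dt=S^\alpha_T$ and $\int_T^\infty q^r\,dt=S^r_T$. It gives $\int_0^T S^\alpha\ell_r(\alpha)\,dt\le H$ for every $T$, hence $H=\infty$. This is the key step of the paper's proof. With it, the truncated identity handles all paths at once, so your case split becomes unnecessary; your remainder argument survives only as a Jensen-free proof of the finite case.

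\textbf{A smaller error.} The claim $\ell_a(z)\ge a$ for $z\le a/e$ is false: $\ell_a$ decreases on $[0,a]$, and $\ell_a(a/e)=(1-2/e)a$. The conclusion that the right side is infinite is still true. For instance, $u\log u\ge-1/e$ gives $\ell_a(z)\ge(1-1/e)a-z$, hence
\[
  \int_0^\infty S^\alpha\ell_r(\alpha)\,dt\ge(1-1/e)\int_0^\infty S^\alpha r\,dt-1 .
\]
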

\begin{proof}
We first work on a fixed path on which admissibility and the reference
conditions hold. Write $a_t=\alpha_t$, $b_t=r(t,X_t)$,
$S_t=e^{-\int_0^t a_s ds}$, $R_t=e^{-\int_0^t b_s ds}$,
$q_t=S_ta_t$, and $k_t=R_tb_t$.
Both $q_t\,dt$ and $k_t\,dt$ are probability measures, and $k_t>0$.
Let $H$ be their relative entropy.

For $T<\infty$, retain the exact stopping time on $[0,T]$ and put all
the mass after $T$ at a single additional point. The relative entropy
of the resulting two probability measures is
$$
  H_T=\int_0^T q_t\log\frac{q_t}{k_t}\,dt
        +S_T\log\frac{S_T}{R_T}.
$$
All terms are well defined as extended integrals. Put
$C_t=\int_0^t b_s\,ds$. Since $C_T<\infty$, the survival identity and
Tonelli's theorem give
$$
  \int_0^T q_t\log S_t\,dt+S_T\log S_T
  =-\int_0^T q_t\,dt,
\qquad
  \int_0^T q_t C_t\,dt+S_TC_T
  =\int_0^T S_tb_t\,dt.
$$
For the first identity, apply the chain rule to
$S\log S-S$ up to levels where $S$ is positive and then let the level
decrease to zero; the function has a finite limit at zero.
In particular, $\int_0^T q_t|\log S_t|\,dt\le1$.
Moreover,
$$
  q_t\log\frac{a_t}{b_t}\ge-\frac{S_tb_t}{e},
$$
so its negative part is integrable on $[0,T]$.
Hence the logarithmic terms can be combined, and the two identities yield
$$
  H_T=\int_0^T S_t\ell_{b_t}(a_t)\,dt.
$$

To pass to infinite time, use the nonnegative function
$h(u)=u\log u-u+1$. We have
$$
  H=\int_0^\infty k_t h(q_t/k_t)\,dt,
  \qquad
  H_T=\int_0^T k_t h(q_t/k_t)\,dt
          +R_T h(S_T/R_T).
$$
Jensen's inequality on the tail $(T,\infty)$ gives $H_T\le H$,
whereas nonnegativity gives
$H_T\ge\int_0^T k_t h(q_t/k_t)\,dt$.
It follows that $H_T\to H$, including when $H=\infty$.
Monotone convergence on the intensity side proves the pathwise identity.

Finally, the conditional measures of $\xi_\alpha$ and $\xi_r$ have the
densities just considered, with the same first marginal $P_\mu$.
The chain rule for relative entropy \cite{DupuisEllis} gives
$$
  \KL(\xi_\alpha\|\xi_r)
   =\int_\Omega\KL(\xi_\alpha^\omega\|\xi_r^\omega)\,P_\mu(d\omega).
$$
In this density setting the chain rule follows directly from Tonelli's
theorem applied to the nonnegative integrand $k_t h(q_t/k_t)$.
Integrating the pathwise identity proves the proposition.
\end{proof}

\begin{lemma}
\label{lem:progressive-density}
Let $A$ be an adapted continuous nondecreasing process with $A_0=0$.
Suppose that, for a jointly measurable $a\ge0$,
$$
  A_t=\int_0^t a_s\,ds\qquad(t\ge0)
$$
almost surely. Then $a$ has a finite-valued progressively measurable
representative with respect to $dt\otimes P_\mu$.
\end{lemma}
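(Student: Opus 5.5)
Since $A$ is adapted and continuous, it is progressively measurable. The plan is to use the fact that the derivative of an absolutely continuous function can be recovered as a limit of difference quotients built from values of the process at past times only. Concretely, define the backward difference quotients
$$
  a^n_t:=n\bigl(A_t-A_{(t-1/n)\vee0}\bigr),\qquad
  \tilde a_t:=\limsup_{n\to\infty}a^n_t\,\1_{\{\limsup_n a^n_t<\infty\}}.
$$
Each $a^n$ is a continuous adapted process: $A_{(t-1/n)\vee 0}$ is $\F_t$-measurable because it is $\F_{(t-1/n)\vee0}$-measurable. Hence each $a^n$ is progressively measurable. Taking $\limsup$, and then multiplying by the indicator of the progressive set where the $\limsup$ is finite, preserves progressive measurability. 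So $\tilde a$ is a finite-valued nonnegative progressively measurable process. Nonnegativity follows since $A$ is nondecreasing, so every $a^n\ge0$.

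It then remains to show that $\tilde a=a$ $dt\otimes P_\mu$-a.e. Fix $\omega$ in the full-measure set where $A_t(\omega)=\int_0^t a_s(\omega)\,ds$ for all $t$. On that set $a_\cdot(\omega)$ is locally integrable, because $A_t<\infty$. By the Lebesgue differentiation theorem, applied in its one-sided form $n\int_{t-1/n}^t a_s\,ds\to a_t$ at Lebesgue points, we get $a^n_t(\omega)\to a_t(\omega)$ for Lebesgue-a.e. $t$. At such $t$ the $\limsup$ is finite because $a_t$ is finite, so $\tilde a_t(\omega)=a_t(\omega)$.

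The exceptional set $\{(t,\omega):\tilde a_t(\omega)\neq a_t(\omega)\}$ is jointly measurable, since both processes are jointly measurable. Its $\omega$-sections are Lebesgue-null for a.e. $\omega$, so Tonelli's theorem shows it is $dt\otimes P_\mu$-null. As a consequence, $\int_0^t\tilde a_s\,ds=A_t$ for all $t$ almost surely.

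The main point requiring care is joint versus progressive measurability, together with the role of the augmentation. I will handle it by working with the usual augmented filtration $(\F_t)$ fixed after Assumption~\ref{ass:markov-class}. If $A$ is only adapted to the augmented filtration, continuity still gives progressive measurability with respect to $(\F_t)$, so no modification on null sets is needed beyond what the convention already allows. A second point is finiteness of $a$: if the given $a$ could take the value $\infty$ on a null set, it does not matter, because we only compare a.e. and $\tilde a$ is finite by construction.
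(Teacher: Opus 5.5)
Your proposal is correct and follows the paper's own proof essentially step for step. Both use backward difference quotients $n\bigl(A_t-A_{(t-1/n)\vee0}\bigr)$ and the progressive measurability of their $\limsup$. Both then apply the Lebesgue differentiation theorem path by path and reset the $\limsup$ to zero where it is infinite. Your explicit Tonelli argument for the exceptional set and your remark on the augmented filtration simply spell out details that the paper leaves implicit.
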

\begin{proof}
Set $a_t^n=n(A_t-A_{(t-1/n)^+})$.
Each $a^n$ is adapted and continuous, hence progressively measurable.
Their pointwise upper limit is progressively measurable and, by the
Lebesgue differentiation theorem, equals $a$ for
$dt\otimes P_\mu$-almost every $(t,\omega)$.
The integrability of $a$ on finite intervals implies that this upper
limit is finite almost everywhere. Set it equal to zero wherever it
is infinite. This gives the required representative.
\end{proof}

\section{Markovization and its properties}
\label{sec:markovization}

We first construct a Markovian representative using the Markov
structure and the measure-valued balance relation. Under
Assumption~\ref{ass:reference-rate}, we then derive the entropy inequality
and its exact remainder formula.

\begin{theorem}
\label{thm:markovization-intensity}
Let $\alpha$ be an admissible progressively measurable intensity, and
let $L^\alpha$, $\overline L^\alpha$, and $\pi_\alpha$ be as in
Definition~\ref{def:unstopped-mass-flow}.
Then $\pi_\alpha\ll\overline L^\alpha$.
Every finite-valued Borel version
$$
  \lambda^\alpha(t,x)=\frac{d\pi_\alpha}{d\overline L^\alpha}(t,x)
$$
defines an admissible Markovian intensity. Moreover,
$$
  L_t^{\lambda^\alpha}=L_t^\alpha\quad(t\ge0),
  \qquad
  \pi_{\xi_{\lambda^\alpha}}=\pi_\alpha.
$$
\end{theorem}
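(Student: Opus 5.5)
The plan is to define $\lambda^\alpha$ as a Radon--Nikodym density, feed the balance relation of Lemma~\ref{lem:unstopped-mass-balance} into the uniqueness result Lemma~\ref{lem:balance-uniqueness}, and read off both identities.

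\emph{Step 1: absolute continuity.} Let $N\subset[0,\infty)\times E$ be Borel with $\overline L^\alpha(N)=0$, i.e.\ $\int_0^\infty\E_{P_\mu}[S_t^\alpha\1_N(t,X_t)]\,dt=0$. Then $S_t^\alpha\1_N(t,X_t)=0$ for $dt\otimes P_\mu$-a.e.\ $(t,\omega)$. Since $q_t^\alpha=S_t^\alpha\alpha_t$ and $\alpha_t$ is finite, also $q_t^\alpha\1_N(t,X_t)=0$ a.e. Hence $\pi_\alpha(N)=\E\int q_t^\alpha\1_N(t,X_t)\,dt=0$.

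Both measures are $\sigma$-finite: $\overline L^\alpha([0,T]\times E)\le T$ and $\pi_\alpha$ is a probability measure. So a Borel density $\lambda^\alpha$ exists, finite $\overline L^\alpha$-a.e.; we redefine it to be $0$ where it is infinite. By construction,
\[
\pi_\alpha(ds,dx)=\lambda^\alpha(s,x)\,L_s^\alpha(dx)\,ds.
\]
Moreover $\int_0^T L_s^\alpha(\lambda^\alpha(s,\cdot))\,ds=\pi_\alpha([0,T]\times E)\le1$, which is the integrability hypothesis of Lemma~\ref{lem:balance-uniqueness}.

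\emph{Step 2: balance and identification.} Substituting this disintegration into Lemma~\ref{lem:unstopped-mass-balance} gives, for every $f\in B_b^+(E)$,
\[
L_t^\alpha(f)=\mu P_{0,t}f-\int_0^t L_s^\alpha\bigl(\lambda^\alpha(s,\cdot)P_{s,t}f\bigr)\,ds.
\]
We also need $t\mapsto L_t^\alpha$ to be a Borel kernel. This holds by joint measurability of $S^\alpha_t f(X_t)$ and Fubini. Lemma~\ref{lem:balance-uniqueness} then yields
\[
L_t^\alpha(f)=\E_{P_\mu}[S_t^{\lambda}f(X_t)],\qquad S_t^\lambda=\exp\Bigl(-\int_0^t\lambda^\alpha(s,X_s)\,ds\Bigr).
\]
Once admissibility of $\lambda^\alpha(t,X_t)$ is shown, this is $L^{\lambda^\alpha}_t=L^\alpha_t$.

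\emph{Step 3: admissibility (the main obstacle).} We need $\int_0^\infty S_t^\lambda\lambda^\alpha(t,X_t)\,dt=1$ a.s. By Lemma~\ref{lem:deterministic-survival-identity}, the integral is at most $1$ pathwise, and equality gives the survival identity. So it suffices to show its expectation equals $1$. By Tonelli and Step 2,
\[
\E\int_0^\infty S^\lambda_t\lambda^\alpha(t,X_t)\,dt=\int_0^\infty L^\alpha_t(\lambda^\alpha(t,\cdot))\,dt=\pi_\alpha([0,\infty)\times E)=1.
\]
The subtle point is that Step 2 identifies $L^\alpha_t(g)$ with $\E[S_t^\lambda g(X_t)]$ only for bounded $g$. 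We extend it to the unbounded $g=\lambda^\alpha(t,\cdot)$ by monotone convergence applied to $\lambda^\alpha\wedge n$. The a.s.\ equality then follows because a $[0,1]$-valued variable with expectation $1$ equals $1$ a.s.

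\emph{Step 4: observed measure.} With admissibility established, Step 2 gives $L^{\lambda^\alpha}_t=L^\alpha_t$ for all $t$. Then, by the same monotone-convergence extension,
\[
\pi_{\xi_{\lambda^\alpha}}(dt,dx)=\E[S_t^\lambda\lambda^\alpha(t,X_t);X_t\in dx]\,dt=\lambda^\alpha(t,x)L^\alpha_t(dx)\,dt=\pi_\alpha(dt,dx).
\]
Finally, the result does not depend on the chosen version of $\lambda^\alpha$. Two Borel versions differ only on an $\overline L^\alpha$-null set, and Steps 1--4 apply to any version.
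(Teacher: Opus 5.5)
Your proposal is correct and follows the paper's proof essentially step for step. The steps match in order: absolute continuity from finiteness of $\alpha$; the balance relation fed into Lemma~\ref{lem:balance-uniqueness} with the bound $\int_0^T L_s^\alpha(\lambda^\alpha(s,\cdot))\,ds\le1$; admissibility from the pathwise bound $\int_0^\infty S_t^\lambda\lambda^\alpha(t,X_t)\,dt\le1$ combined with expectation $1$; and identification of the observed measure. The only point you leave implicit is that $\lambda^\alpha(t,X_t)$ is progressively measurable, being a Borel function of $t$ and the adapted c\`adl\`ag process $X$; admissibility requires this, and the paper records it in one line.
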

\begin{proof}
If a Borel set $A\subset[0,\infty)\times E$ satisfies
$\overline L^\alpha(A)=0$, then
$S_t^\alpha\1_A(t,X_t)=0$ almost everywhere under $dt\otimes P_\mu$.
Since $\alpha$ is finite-valued,
$q_t^\alpha\1_A(t,X_t)=0$ almost everywhere as well. Thus
$\pi_\alpha(A)=0$.
The measure $\overline L^\alpha$ is $\sigma$-finite, since
$\overline L^\alpha([0,T]\times E)\le T$.
As $\pi_\alpha$ is finite, its Radon--Nikodym derivative is finite
$\overline L^\alpha$-almost everywhere. Choose a finite-valued Borel
version, denoted by $\lambda^\alpha$.

Joint measurability of $S^\alpha$ and $X$ implies that $t\mapsto
L_t^\alpha$ is a Borel kernel. The process
$\lambda^\alpha(t,X_t)$ is progressively measurable because $X$ is
adapted and has c\`adl\`ag paths. By
Lemma~\ref{lem:unstopped-mass-balance},
$$
  L_t^\alpha(f)=\mu P_{0,t}f
     -\int_0^t L_s^\alpha\bigl(
          \lambda^\alpha(s,\cdot)P_{s,t}f\bigr)\,ds.
$$
For every $T<\infty$,
$$
  \int_0^T L_s^\alpha(\lambda^\alpha(s,\cdot))\,ds
  =\pi_\alpha([0,T]\times E)\le1.
$$
Lemma~\ref{lem:balance-uniqueness} therefore gives, for every $t$ and
$f\in B_b^+(E)$,
$$
  L_t^\alpha(f)
  =\E_{P_\mu}\left[
       \exp\left(-\int_0^t\lambda^\alpha(s,X_s)\,ds\right)f(X_t)
    \right].
$$
Set
$$
  \widetilde S_t
  =\exp\left(-\int_0^t\lambda^\alpha(s,X_s)\,ds\right),
  \qquad
  \widetilde L_t(f)=\E_{P_\mu}[\widetilde S_t f(X_t)],
  \qquad
  \widetilde q_t=\widetilde S_t\lambda^\alpha(t,X_t).
$$
The preceding identity gives $\widetilde L_t=L_t^\alpha$.
Tonelli's theorem then yields
$$
  \E_{P_\mu}\int_0^\infty \widetilde q_t\,dt
  =\int_0^\infty
       \widetilde L_t(\lambda^\alpha(t,\cdot))\,dt
  =\int_0^\infty L_t^\alpha(\lambda^\alpha(t,\cdot))\,dt
   =\pi_\alpha([0,\infty)\times E)=1.
$$
Lemma~\ref{lem:deterministic-survival-identity}, applied pathwise, gives
$\int_0^\infty \widetilde q_t\,dt\le1$.
Since its expectation is $1$, this integral equals $1$ almost surely.
The same lemma gives
$$
  \widetilde S_t
  =1-\int_0^t \widetilde q_s\,ds
  \qquad(t\ge0)
$$
on a common set of full probability. Thus $\lambda^\alpha$ is admissible,
$\widetilde S=S^{\lambda^\alpha}$, and
$\widetilde L_t=L_t^{\lambda^\alpha}=L_t^\alpha$.

Finally, for every Borel $A\subset[0,\infty)\times E$,
$$
  \pi_{\xi_{\lambda^\alpha}}(A)
  =\E_{P_\mu}\int_0^\infty
       \widetilde q_t\1_A(t,X_t)\,dt
  =\int_A\lambda^\alpha(t,x)L_t^\alpha(dx)\,dt
   =\pi_\alpha(A).
$$
\end{proof}

Finite relative entropy also provides an intensity representation.

\begin{proposition}
\label{prop:finite-entropy-intensity}
Under Assumption~\ref{ass:reference-rate}, if $\xi\in\RST$ and
$\KL(\xi\|\xi_r)<\infty$, there is an admissible progressively
measurable intensity $\alpha$ such that $\xi=\xi_\alpha$.
\end{proposition}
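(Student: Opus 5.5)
Since $\KL(\xi\|\xi_r)<\infty$, we have $\xi\ll\xi_r$. Both measures have first marginal $P_\mu$, so the density $Z=d\xi/d\xi_r$ gives, for $P_\mu$-a.e.\ $\omega$, $\xi^\omega(dt)=Z(\omega,t)\,\xi_r^\omega(dt)$. Recall $\xi_r^\omega(dt)=R_t r(t,X_t)\,dt$, where $R_t=\exp(-\int_0^t r(s,X_s)\,ds)$. Hence $\xi^\omega$ is absolutely continuous in time, with jointly measurable density $p_t(\omega)=Z(\omega,t)R_t(\omega)r(t,X_t(\omega))$. This requires choosing a jointly measurable version of $Z$, which Radon--Nikodym on the product space provides. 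In particular $S_t^\xi=1-\int_0^t p_s\,ds$ is continuous in $t$, and by Definition~\ref{def:rst} it is adapted.

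The natural candidate is the hazard rate $\alpha_t=p_t/S_t^\xi$ on $\{S_t^\xi>0\}$, and $\alpha_t=0$ (or any finite value) where $S_t^\xi=0$. To obtain progressive measurability I would not use $p$ directly, which is only jointly measurable. Instead, let $\tau(\omega)=\inf\{t:S_t^\xi=0\}$ and set $A_t=-\log S_t^\xi$ for $t<\tau$. This is an adapted, continuous, nondecreasing process with $A_0=0$, and on $[0,\tau)$ it is locally absolutely continuous with $dA_t=(p_t/S_t^\xi)\,dt$. To bring Lemma~\ref{lem:progressive-density} into play, I would apply it to the stopped processes $A_{t\wedge\tau_n}$ with $\tau_n=\inf\{t:S_t^\xi\le 1/n\}$, which are stopping times by adaptedness and continuity. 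These processes are genuinely absolutely continuous with density $\1_{\{t<\tau_n\}}p_t/S_t^\xi$. The resulting progressive representatives can be patched together on $[\tau_{n-1},\tau_n)$. Beyond $\tau$ set $\alpha=0$; this is progressive since $\{t\ge\tau\}$ is.

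Next, I would verify admissibility. For $t<\tau$ we have $S_t^\alpha=e^{-A_t}=S_t^\xi$. As $t\uparrow\tau$, $A_t\to\infty$, so $\int_0^\tau\alpha_s\,ds=\infty$ and $S_t^\alpha=0=S_t^\xi$ for $t\ge\tau$. Hence $q_t^\alpha=S_t^\alpha\alpha_t=p_t$ for a.e.\ $t$: on $[0,\tau)$ this holds by construction, and after $\tau$ both sides vanish, because $p_t=0$ a.e.\ there since $S^\xi$ is constant zero. Therefore $\int_0^t q_s^\alpha\,ds=1-S_t^\xi=1-S_t^\alpha$. Moreover $\int_0^\infty q^\alpha\,dt=1$, since $\xi^\omega$ is a probability measure on $[0,\infty)$, i.e.\ the stopping time is a.s.\ finite. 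Finally $\xi_\alpha(d\omega,dt)=q_t^\alpha\,dt\,P_\mu(d\omega)=\xi$.

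I expect the main obstacle to be the measurability bookkeeping. One issue is producing a version of the density $p$ compatible with the disintegration for a.e.\ $\omega$ simultaneously. The other is handling the blow-up of $-\log S^\xi$ at $\tau$ so that Lemma~\ref{lem:progressive-density} applies; it needs a globally absolutely continuous process, hence the localization by $\tau_n$. Finite entropy is used only to get the time-absolute continuity of $\xi^\omega$.
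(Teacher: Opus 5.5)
Your proposal is correct and follows essentially the same route as the paper: $\xi\ll\xi_r$ gives absolute continuity of $\xi^\omega$, Lemma~\ref{lem:progressive-density} gives a progressive density, you take the hazard-rate quotient on $\{S^\xi>0\}$, $-\log S^\xi$ diverges at its first zero, and the density vanishes afterwards (your product-space derivative $Z=d\xi/d\xi_r$ is a slightly more elementary substitute for the kernel Radon--Nikodym theorem the paper cites). The localization by $\tau_n$ and the patching are valid but unnecessary: the paper applies Lemma~\ref{lem:progressive-density} directly to the bounded, globally absolutely continuous process $1-S^\xi$, and dividing the resulting progressive density by the progressive process $S^\xi$ on $\{S^\xi>0\}$ is then automatically progressive.
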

\begin{proof}
Finite entropy implies $\xi\ll\xi_r$.
Since both measures have first marginal $P_\mu$, disintegration gives
$\xi^\omega\ll\xi_r^\omega$ for $P_\mu$-almost every $\omega$.
The reference conditional law has strictly positive density
$S_t^r r(t,X_t)$ with respect to $dt$ for finite $t$.
Hence $\xi^\omega\ll dt$ almost surely.
The Radon--Nikodym theorem for kernels gives a jointly measurable
density $a\ge0$ such that
$$
  \xi^\omega(dt)=a_t(\omega)\,dt
  \quad P_\mu\text{-a.s.}
$$
Disintegration and the kernel density theorem apply because the
underlying Borel spaces are standard Borel; see
\cite{BogachevMeasureTheory}.

The process $A_t^\xi=\xi^\omega([0,t])$ is adapted and has almost surely
absolutely continuous paths, with
$A_t^\xi=\int_0^t a_s\,ds$ and $A_0^\xi=0$.
Using the completed filtration and
Lemma~\ref{lem:progressive-density}, choose $a$ finite-valued and
progressively measurable. Write $S_t^\xi=1-A_t^\xi$ and set
$$
  \alpha_t=
  \begin{cases}
     a_t/S_t^\xi,&S_t^\xi>0,\\
     0,&S_t^\xi=0.
  \end{cases}
$$
This is a finite-valued progressively measurable process.

Fix a path on which these identities hold, and let
$\sigma=\inf\{t:S_t^\xi=0\}$, with $\inf\varnothing=\infty$.
For $t<\sigma$, $S_s^\xi\ge S_t^\xi>0$ on $[0,t]$. Thus $\alpha$ is
integrable there, and the chain rule gives
$$
  -\log S_t^\xi=\int_0^t\alpha_s\,ds.
$$
If $\sigma<\infty$, continuity gives $S_t^\xi\downarrow0$ as
$t\uparrow\sigma$, so the integral diverges at $\sigma$.
The same exponential representation therefore holds for all $t$, with
$e^{-\infty}=0$. Since a nonnegative nonincreasing survival function
stays zero after its first zero, $a_t=0$ for almost every $t$ on
$\{S_t^\xi=0\}$. Consequently,
$$
  a_t=S_t^\xi\alpha_t\quad dt\otimes P_\mu\text{-a.e.},
  \qquad
  \int_0^\infty S_t^\xi\alpha_t\,dt=1
  \quad P_\mu\text{-a.s.}
$$
The original survival identity now proves admissibility and
$\xi=\xi_\alpha$.
\end{proof}

\begin{theorem}
\label{thm:entropy-markovization}
Assume Assumption~\ref{ass:reference-rate}.
Let $\alpha$ be admissible, with $\KL(\xi_\alpha\|\xi_r)<\infty$,
and let $\lambda^\alpha$ be constructed in
Theorem~\ref{thm:markovization-intensity}. Then
$$
  \KL(\xi_\alpha\|\xi_r)
  =\KL(\xi_{\lambda^\alpha}\|\xi_r)
  +\E_{P_\mu}\int_0^\infty
       S_t^\alpha\ell_{\lambda^\alpha(t,X_t)}(\alpha_t)\,dt.
$$
In particular, $\KL(\xi_{\lambda^\alpha}\|\xi_r)
\le\KL(\xi_\alpha\|\xi_r)$.
Equality holds if and only if
$\alpha_t=\lambda^\alpha(t,X_t)$ almost everywhere with respect to
$S_t^\alpha\,dt\,P_\mu(d\omega)$.
\end{theorem}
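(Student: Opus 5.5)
The plan is a pointwise algebraic identity for $\ell$, integrated against $S_t^\alpha\,dt\,P_\mu$, with the cross term rewritten through the observed measure and the surviving occupation measure. For $a,b>0$ and $z\ge0$,
$$
  \ell_b(z)=\ell_a(z)+\ell_b(a)+(z-a)\log\frac ab .
$$
This follows directly by expanding both sides. Take $z=\alpha_t$, $a=\lambda^\alpha(t,X_t)$ and $b=r(t,X_t)$. Multiply by $S_t^\alpha$ and integrate.

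By Proposition~\ref{prop:entropy-intensity}, the left side integrates to $\KL(\xi_\alpha\|\xi_r)$. The first term on the right is the remainder. The second term is
$$
  \int\ell_{r}(\lambda^\alpha)\,d\overline L^\alpha
  =\int\ell_r(\lambda^\alpha)\,d\overline L^{\lambda^\alpha}
  =\KL(\xi_{\lambda^\alpha}\|\xi_r).
$$
This uses $L^{\lambda^\alpha}_t=L^\alpha_t$ from Theorem~\ref{thm:markovization-intensity} and Proposition~\ref{prop:entropy-intensity} again, since $\ell_r(\lambda^\alpha)$ is a function of $(t,X_t)$. The cross term integrates to
$$
  \int\log\frac{\lambda^\alpha}{r}\,d\pi_\alpha
  -\int\lambda^\alpha\log\frac{\lambda^\alpha}{r}\,d\overline L^\alpha ,
$$
which vanishes because $d\pi_\alpha=\lambda^\alpha\,d\overline L^\alpha$. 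The first piece uses that $S_t^\alpha\alpha_t\,dt\,P_\mu$ pushes forward to $\pi_\alpha$.

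The points where $\lambda^\alpha=0$ need separate treatment. There $\ell_0(z)=\infty$ for $z>0$, and the log is $-\infty$. However, $\{\lambda^\alpha=0\}$ is $\pi_\alpha$-null, so $q^\alpha_t=S^\alpha_t\alpha_t$ vanishes a.e. on $\{\lambda^\alpha(t,X_t)=0\}$. Hence $\alpha_t=0$ wherever $S_t^\alpha>0$ on that set, and the identity degenerates to $\ell_b(0)=\ell_0(0)+\ell_b(0)$, which holds.

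The main obstacle is integrability: splitting an integral of nonnegative terms into pieces involving a signed cross term. I would handle it as follows. All three $\ell$ terms are nonnegative, so the issue is only to justify that $(z-a)\log(a/b)$, integrated against $S^\alpha\,dt\,dP_\mu$, is zero and not an undefined $\infty-\infty$.

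The first attempt is truncation. Restrict to $[0,T]$ and to the set $B_n=\{1/n\le\lambda^\alpha/r\le n\}$, where $|\log(\lambda^\alpha/r)|\le\log n$. On $B_n\cap([0,T]\times E)$, both $\int\alpha_t S_t^\alpha\,dt\,dP_\mu\le1$ and $\int\lambda^\alpha\,d\overline L^\alpha\le1$ are finite. There the cross term is integrable, and it vanishes exactly, because the vanishing argument only uses integration of Borel functions of $(t,x)$ against $\pi_\alpha-\lambda^\alpha\overline L^\alpha$ restricted to a Borel set. So the identity holds on each $B_n\cap[0,T]$ with finite quantities. Letting $n,T\to\infty$, monotone convergence applies to each nonnegative $\ell$ term; the complement of $\bigcup_n B_n$ is the set $\{\lambda^\alpha=0\}$, which was treated above. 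This gives the decomposition in $[0,\infty]$. Finiteness of $\KL(\xi_\alpha\|\xi_r)$ makes every term finite.

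The inequality is then immediate. For the equality case, note that $\ell_a(z)\ge0$ with equality iff $z=a$ (for $a\ge0$, using the convention for $\ell_0$). So the remainder vanishes iff $\alpha_t=\lambda^\alpha(t,X_t)$ for $S_t^\alpha\,dt\,P_\mu$-a.e. $(t,\omega)$.
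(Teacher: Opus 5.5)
Your proposal is correct and follows essentially the same route as the paper. It uses the same three-term identity for $\ell$ and handles $\{\lambda^\alpha=0\}$ separately via $\pi_\alpha(\{\lambda^\alpha=0\})=0$. It truncates to a set where $|\log(\lambda^\alpha/r)|$ is bounded (the paper's $A_N$ is your $B_n$ with $N=\log n$) and then applies monotone convergence. It identifies the middle term via $L^{\lambda^\alpha}_t=L^\alpha_t$ and Proposition~\ref{prop:entropy-intensity}. Your extra truncation to $[0,T]$ is harmless but unnecessary, because $\alpha_t$ and $\lambda^\alpha(t,X_t)$ already have total integral $1$ under $S_t^\alpha\,dt\,P_\mu$.
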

\begin{proof}
Define
$$
  m(d\omega,dt)=S_t^\alpha(\omega)\,dt\,P_\mu(d\omega),
  \qquad \Phi(\omega,t)=(t,X_t(\omega)).
$$
Then $\Phi_\#m=\overline L^\alpha$ and
$\Phi_\#(\alpha m)=\pi_\alpha$.
The definition of $\lambda^\alpha$ implies that, for every bounded
Borel function $\varphi$ on time--state space,
$$
  \int\varphi(t,X_t)\alpha_t\,dm
  =\int\varphi(t,X_t)\lambda^\alpha(t,X_t)\,dm.
$$
Write $z=\alpha_t$, $y=\lambda^\alpha(t,X_t)$, and $b=r(t,X_t)$.
Both $z$ and $y$ are integrable under $m$, with integral $1$.
Taking $\varphi=\1_{\{\lambda^\alpha=0\}}$ shows that $z=0$
$m$-almost everywhere on $\{y=0\}$.
On this set,
$$
  \ell_b(z)=\ell_b(y)=b,\qquad \ell_y(z)=0.
$$

On $\{y>0\}$, the algebraic identity
$$
  \ell_b(z)=\ell_b(y)+\ell_y(z)+(z-y)\log\frac yb
$$
holds. For $N\ge1$, put
$$
  A_N=\left\{y>0,\ \left|\log\frac yb\right|\le N\right\},
  \qquad g_N=\1_{A_N}\log\frac yb.
$$
The function $g_N$ is bounded and is a Borel function of $(t,X_t)$.
The defining identity for $\lambda^\alpha$ yields
$\int(z-y)g_N\,dm=0$.
Since $\int\ell_b(z)\,dm<\infty$ by
Proposition~\ref{prop:entropy-intensity}, integration over $A_N$ is
justified and gives
$$
  \int_{A_N}\ell_b(z)\,dm
  =\int_{A_N}\ell_b(y)\,dm+\int_{A_N}\ell_y(z)\,dm.
$$
The three integrands are nonnegative, and
$A_N\uparrow\{y>0\}$. Monotone convergence, followed by adding the
already verified identity on $\{y=0\}$, gives
$$
  \int\ell_b(z)\,dm
  =\int\ell_b(y)\,dm+\int\ell_y(z)\,dm.
$$
By Proposition~\ref{prop:entropy-intensity}, the left-hand side is
$\KL(\xi_\alpha\|\xi_r)$.
Theorem~\ref{thm:markovization-intensity} gives
$L_t^{\lambda^\alpha}=L_t^\alpha$, hence
$$
  \int\ell_{r(t,X_t)}(\lambda^\alpha(t,X_t))\,dm
  =\int_0^\infty L_t^\alpha
        \bigl(\ell_{r(t,\cdot)}(\lambda^\alpha(t,\cdot))\bigr)\,dt
  =\int_0^\infty L_t^{\lambda^\alpha}
        \bigl(\ell_{r(t,\cdot)}(\lambda^\alpha(t,\cdot))\bigr)\,dt
  =\KL(\xi_{\lambda^\alpha}\|\xi_r).
$$
This proves the decomposition.
For every $a\ge0$, $\ell_a(z)\ge0$, with equality exactly when $z=a$,
including the convention for $a=0$.
This proves the inequality and its equality criterion.
\end{proof}

\section{Reduction of variational problems}
\label{sec:variational}

We begin with an example showing how the stopped-path law can change
under Markovization. We then prove the canonical minimizer and the
reduction of entropy-penalized problems.

\begin{example}
\label{ex:markovization-limit}
Let $E=\{-1,0,1\}$ and let $\mu=\tfrac12(\delta_{-1}+\delta_1)$.
Let $X_t=X_0$ for $t<1$ and $X_t=0$ for $t\ge1$.
This is a time-inhomogeneous c\`adl\`ag Markov process.
Take $\alpha_t=0$ before time $1$. After time $1$, take rate $2$ on
$\{X_{1-}=1\}$ and rate $1$ on $\{X_{1-}=-1\}$.
The observed measure is supported on $[1,\infty)\times\{0\}$.

Writing $u=t-1>0$, the Markovian rate on this support is
$$
  \lambda^\alpha(t,0)
  =\frac{2e^{-2u}+e^{-u}}{e^{-2u}+e^{-u}}.
$$
It is zero before time $1$. Its survival function after time $1$ is
$(e^{-2u}+e^{-u})/2$ on both initial branches.
Thus, for the original rule and its Markovization, respectively,
$$
  \xi_\alpha\bigl(\{X_{1-}=1,\tau>t\}\bigr)=\tfrac12e^{-2u},
  \qquad
  \xi_{\lambda^\alpha}\bigl(\{X_{1-}=1,\tau>t\}\bigr)
      =\tfrac14(e^{-2u}+e^{-u}).
$$
Here $\tau$ denotes the time coordinate on $\Omega\times[0,\infty)$.
The two quantities differ, even though the observed measures agree.
For example, their difference changes the expectation of the functional
$\1_{\{\tau>t\}}\1_{\{X_{1-}=1\}}$, with value zero when $\tau\le t$;
this is a functional of the stopped path together with its lifetime.
The unmarked trajectory $(X_{s\wedge\tau})_{s\ge0}$ is unchanged in
this particular example, because $X$ is already constant after time $1$.

The following modification also changes the law of the unmarked stopped
trajectory. Take $E=\R$ and set $X_t=t-1$ for $t\ge1$, keeping the same
initial branches and intensities.
The two displayed probabilities remain unchanged. For $t>1$, the event
$\{X_{1-}=1,\tau>t\}$ agrees almost surely with
$\{X_0=1,X_{t\wedge\tau}=t-1\}$, since the stopping laws have no
time atoms. Their observed measures agree. The joint laws of $X_0$ and
$X_{t\wedge\tau}$ are different.
\end{example}

\begin{corollary}
\label{cor:canonical-pi}
Assume Assumption~\ref{ass:reference-rate}.
Suppose that $\pi$ is the observed measure of some $\xi\in\RST$ with
$\KL(\xi\|\xi_r)<\infty$. Set
$$
  \mathcal A(\pi)=\{\zeta\in\RST:\pi_\zeta=\pi\}.
$$
Then $\KL(\cdot\|\xi_r)$ has a unique minimizer on $\mathcal A(\pi)$.
It is the Markovian stopping time $\xi_{\lambda^\pi}$, where
$$
  \lambda^\pi=\frac{d\pi}{d\overline L^\pi},\qquad
  \overline L^\pi(dt,dx)=dt\,L_t^\pi(dx),
$$
$$
  L_t^\pi(f)=\mu P_{0,t}f
     -\int_{[0,t]\times E}P_{s,t}f(x)\,\pi(ds,dx),
  \qquad f\in B_b^+(E).
$$
\end{corollary}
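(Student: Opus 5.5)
We reduce everything to Theorems~\ref{thm:markovization-intensity} and \ref{thm:entropy-markovization}. First, $\lambda^\pi$ is well defined and intrinsic to $\pi$. Take any $\zeta\in\mathcal A(\pi)$ with $\KL(\zeta\|\xi_r)<\infty$; one exists by hypothesis. By Proposition~\ref{prop:finite-entropy-intensity}, $\zeta=\xi_\alpha$ for some admissible progressively measurable $\alpha$, so $\pi_\alpha=\pi$. Lemma~\ref{lem:unstopped-mass-balance} then gives $L_t^\alpha(f)=\mu P_{0,t}f-\int_{[0,t]\times E}P_{s,t}f\,d\pi$, which is exactly $L_t^\pi$. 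Hence $\overline L^\alpha=\overline L^\pi$ and $\lambda^\alpha=\lambda^\pi$ as Radon--Nikodym derivatives, up to $\overline L^\pi$-null sets. The absolute continuity $\pi\ll\overline L^\pi$ and the existence of a finite Borel version come from Theorem~\ref{thm:markovization-intensity}. Two finite Borel versions of $\lambda^\pi$ differ only on an $\overline L^\pi$-null set. Such a set is negligible for $S_t^{\lambda}\,dt\,P_\mu$, because the survival masses coincide, so both versions generate the same $\xi_{\lambda^\pi}$. The cleanest way to see this is to apply Theorem~\ref{thm:markovization-intensity} to each version and use Lemma~\ref{lem:balance-uniqueness}.

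Next, $\xi_{\lambda^\pi}$ lies in $\mathcal A(\pi)$ and its entropy is minimal. Theorem~\ref{thm:markovization-intensity} gives $\pi_{\xi_{\lambda^\pi}}=\pi$, so $\xi_{\lambda^\pi}\in\mathcal A(\pi)$. For any $\zeta\in\mathcal A(\pi)$ with infinite entropy there is nothing to prove. If $\zeta$ has finite entropy, write $\zeta=\xi_\alpha$ as above. Theorem~\ref{thm:entropy-markovization} then yields
$\KL(\zeta\|\xi_r)=\KL(\xi_{\lambda^\pi}\|\xi_r)+\E_{P_\mu}\int_0^\infty S_t^\alpha\ell_{\lambda^\pi(t,X_t)}(\alpha_t)\,dt\ge\KL(\xi_{\lambda^\pi}\|\xi_r)$.
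In particular $\KL(\xi_{\lambda^\pi}\|\xi_r)<\infty$, so $\xi_{\lambda^\pi}$ is a minimizer.

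For uniqueness, let $\zeta=\xi_\alpha$ also attain the minimum. The remainder in the decomposition then vanishes, and the equality criterion of Theorem~\ref{thm:entropy-markovization} gives $\alpha_t=\lambda^\pi(t,X_t)$ for $S_t^\alpha\,dt\,P_\mu$-a.e.\ $(t,\omega)$. We must upgrade this to $\xi_\alpha=\xi_{\lambda^\pi}$, and this is the delicate point, because agreement holds only where $S^\alpha>0$. On each path, let $\sigma=\inf\{t:S^\alpha_t=0\}$. On $[0,\sigma)$ the two intensities agree for a.e.\ $t$, so their cumulative integrals agree there, and so do the survival functions. At and after $\sigma$ both survival functions vanish: $S^{\lambda^\pi}$ equals $S^\alpha$ before $\sigma$ and is continuous and nonincreasing. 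Therefore $q^\alpha=q^{\lambda^\pi}$ a.e.\ in $t$, and hence $\xi_\alpha=\xi_{\lambda^\pi}$.

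An alternative route to uniqueness is strict convexity. $\mathcal A(\pi)$ is convex, since the RST condition and the observed measure are linear in $\zeta$, and $\KL(\cdot\|\xi_r)$ is strictly convex on the set where it is finite. This argument could replace the pathwise step if that step proves fiddly.
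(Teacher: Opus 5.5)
Your proof is correct. Existence and minimality follow the paper: Proposition~\ref{prop:finite-entropy-intensity} and Lemma~\ref{lem:unstopped-mass-balance} give $L_t^\alpha=L_t^\pi$ for every finite-entropy competitor $\xi_\alpha\in\mathcal A(\pi)$. Hence one fixed Borel version $\lambda^\pi$ serves as $\lambda^\alpha$ for all of them in Theorem~\ref{thm:entropy-markovization}.

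The genuine difference is uniqueness. The paper argues abstractly: $\mathcal A(\pi)$ is closed under mixtures, and $\KL(\cdot\|\xi_r)$ is strictly convex on finite-entropy measures, so two distinct minimizers would have a midpoint with strictly smaller entropy.

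You instead use the equality case of the decomposition. A minimizer $\xi_\alpha$ has zero remainder, so $\alpha_t=\lambda^\pi(t,X_t)$ for $S_t^\alpha\,dt\,P_\mu$-a.e.\ $(t,\omega)$. Your argument with $\sigma=\inf\{t:S_t^\alpha=0\}$ then works: the survival functions agree on $[0,\sigma)$, and continuity and monotonicity of $S^{\lambda^\pi}$ force it to vanish from $\sigma$ on, together with $S^\alpha$. This correctly gives $S^\alpha=S^{\lambda^\pi}$ pathwise, hence $\xi_\alpha=\xi_{\lambda^\pi}$.

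What each route buys:
\begin{itemize}
\item Yours needs neither convexity of $\mathcal A(\pi)$ nor strict convexity of relative entropy, and it identifies every minimizer directly rather than by contradiction.
\item Yours also makes explicit a point the paper's equality criterion leaves implicit: agreement of intensities only on $\{S^\alpha>0\}$ already determines the stopping measure.
\item The paper's route is shorter and needs no pathwise bookkeeping.
\end{itemize}

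You can drop your separate discussion of version-independence. Every finite Borel version of $\lambda^\pi$ produces a minimizer by the same argument, so uniqueness already forces all versions to define the same $\xi_{\lambda^\pi}$. Note also that Lemma~\ref{lem:balance-uniqueness} by itself only gives equal surviving mass measures, not equal stopping measures; you would need your pathwise step there as well.
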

\begin{proof}
By Proposition~\ref{prop:finite-entropy-intensity}, the given $\xi$ has
an admissible intensity $\alpha$.
Lemma~\ref{lem:unstopped-mass-balance} identifies $L_t^\alpha$ with
$L_t^\pi$. In particular, the displayed formula defines a nonnegative
measure and a Borel kernel, and $\pi\ll\overline L^\pi$.
Fix one finite-valued Borel version $\lambda^\pi$.
Theorems~\ref{thm:markovization-intensity}
and~\ref{thm:entropy-markovization} construct
$\xi_{\lambda^\pi}\in\mathcal A(\pi)$ with finite entropy.

Now let $\zeta\in\mathcal A(\pi)$ be any competitor with finite
entropy. It has an admissible intensity $\beta$ and the same surviving
mass measures $L_t^\beta=L_t^\pi$.
Thus the same function $\lambda^\pi$ is a valid Radon--Nikodym version
for its Markovization. The entropy inequality gives
$$
  \KL(\xi_{\lambda^\pi}\|\xi_r)\le\KL(\zeta\|\xi_r).
$$
Hence $\xi_{\lambda^\pi}$ minimizes the relative entropy on
$\mathcal A(\pi)$.

The set $\mathcal A(\pi)$ is convex: both marginals, the observed
measure, and adaptedness of the conditional distribution functions are
preserved by mixtures. On measures of finite entropy,
$\KL(\cdot\|\xi_r)$ is strictly convex, because $u\mapsto u\log u$
is strictly convex on $[0,\infty)$.
Two distinct minimizers would therefore have a midpoint with strictly
smaller entropy. This proves uniqueness.
\end{proof}

\begin{corollary}
\label{cor:reduction-to-markov}
Under Assumption~\ref{ass:reference-rate}, let
$\Psi:\mathcal P([0,\infty)\times E)\to(-\infty,\infty]$ be any
functional. Then
$$
  \inf_{\xi\in\RST}
       \{\Psi(\pi_\xi)+\KL(\xi\|\xi_r)\}
  =\inf_\lambda
       \{\Psi(\pi_{\xi_\lambda})+\KL(\xi_\lambda\|\xi_r)\},
$$
where the second infimum is over all admissible Borel Markovian
intensities. The equality includes the values $-\infty$ and $\infty$.
\end{corollary}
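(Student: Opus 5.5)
The plan is to prove the two inequalities between the infima, writing $I_{\RST}$ for the left-hand side and $I_{\mathrm M}$ for the right-hand side.

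First, $I_{\RST}\le I_{\mathrm M}$. By Definition~\ref{def:markov-rst}, every admissible Borel Markovian intensity $\lambda$ defines $\xi_\lambda=\xi_{\alpha}$ with $\alpha_t=\lambda(t,X_t)$. This $\alpha$ is progressively measurable, since $X$ is adapted with c\`adl\`ag paths, and by Lemma~\ref{lem:intensity-generated-rst-correct} we have $\xi_\lambda\in\RST$. Each competitor on the right is therefore also a competitor on the left, and the inequality follows. This step covers every value in $[-\infty,\infty]$.

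Next, $I_{\mathrm M}\le I_{\RST}$. It suffices to show that for each $\xi\in\RST$ there is an admissible Markovian $\lambda$ with
$$\Psi(\pi_{\xi_\lambda})+\KL(\xi_\lambda\|\xi_r)\le\Psi(\pi_\xi)+\KL(\xi\|\xi_r).$$
If $\KL(\xi\|\xi_r)=\infty$, the right-hand side is $+\infty$, because $\Psi>-\infty$ and the sum with $+\infty$ is $+\infty$. Any admissible $\lambda$ then works, for instance $\lambda=r$, which is admissible by Assumption~\ref{ass:reference-rate}. If $\KL(\xi\|\xi_r)<\infty$, Proposition~\ref{prop:finite-entropy-intensity} gives an admissible progressively measurable $\alpha$ with $\xi=\xi_\alpha$. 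Theorem~\ref{thm:markovization-intensity} then supplies the admissible Markovian $\lambda^\alpha$ with $\pi_{\xi_{\lambda^\alpha}}=\pi_\alpha=\pi_\xi$, so the $\Psi$-terms coincide. Theorem~\ref{thm:entropy-markovization} gives $\KL(\xi_{\lambda^\alpha}\|\xi_r)\le\KL(\xi\|\xi_r)$. Adding these yields the pointwise inequality, and taking infima gives $I_{\mathrm M}\le I_{\RST}$.

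The only delicate point is the extended-value bookkeeping. Since $\KL\ge0$ and $\Psi$ never takes the value $-\infty$, each sum is well defined in $(-\infty,\infty]$, with no $\infty-\infty$ ambiguity. The infima may nevertheless equal $-\infty$ or $+\infty$, and the two inequalities between sets of values handle this uniformly. The other point to check is that the infimum over Markovian intensities is over a nonempty set, which holds because $r$ itself belongs to it. I expect no substantive obstacle: the corollary is a direct assembly of Proposition~\ref{prop:finite-entropy-intensity} with Theorems~\ref{thm:markovization-intensity} and~\ref{thm:entropy-markovization}.
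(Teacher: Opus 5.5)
Your proposal is correct and follows the paper's proof. Inclusion of the Markovian class in $\RST$ gives one inequality, and for $\xi$ with finite entropy, Proposition~\ref{prop:finite-entropy-intensity} together with Theorems~\ref{thm:markovization-intensity} and~\ref{thm:entropy-markovization} gives a Markovian rule with the same observed measure and no larger entropy. Your separate treatment of the infinite-entropy case via $\lambda=r$ is harmless but unnecessary: such $\xi$ have objective value $+\infty$ and cannot lower the infimum, which is why the paper simply restricts to $\xi$ with finite objective value.
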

\begin{proof}
One inequality follows from inclusion of the Markovian class in $\RST$.
For every $\xi$ with finite objective value,
Proposition~\ref{prop:finite-entropy-intensity} and
Theorem~\ref{thm:entropy-markovization} give a Markovian rule with the
same observed measure and entropy at most $\KL(\xi\|\xi_r)$.
Taking infima gives the reverse inequality.
\end{proof}

\begin{remark}[Connection with optimal Skorokhod embedding]
\label{rem:skorokhod-connection}
In \cite{BeiglbockCoxHuesmann2017}, the optimal embedding problem is
formulated for functions of the stopped path.
If $\gamma((B_s)_{s\le t},t)=G(t,B_t)$, then
$$
  \int\gamma((B_s)_{s\le t},t)\,\xi(d\omega,dt)
  =\int_{[0,\infty)\times\R}G(t,x)\,\pi_\xi(dt,dx),
$$
whenever the integral is well defined.
Such objectives fit the reduction directly, provided that all additional
admissibility constraints are also preserved.

For example, the uniform-integrability condition in Brownian embedding
is preserved. Indeed, if $\tau$ is the time coordinate, then for every
$t\ge0$,
$$
  (X_{t\wedge\tau})_\#\xi_\alpha(dx)
  =L_t^\alpha(dx)+\pi_\alpha([0,t],dx).
$$
This follows by splitting into $\{\tau>t\}$ and $\{\tau\le t\}$.
Both terms are preserved by Markovization. Thus all one-time laws of
the stopped process are preserved. Their joint laws may change. For
real-valued processes, uniform integrability of the family
$(X_{t\wedge\tau})_{t\ge0}$ depends only on these one-time laws and is
therefore preserved as well.

Some other path-dependent criteria can be included by enlarging the
state. For example, let
$Y_t=(B_t,\overline B_t)$, where
$\overline B_t=\sup_{s\le t}B_s$.
This is a continuous Markov process on the Polish state space
$\{(x,m)\in\R^2:x\le m\}$, with initial law given by
$x\mapsto(x,x)$ under $\mu$.
If $\gamma((B_s)_{s\le t},t)=G(t,B_t,\overline B_t)$, then
$$
  \int\gamma((B_s)_{s\le t},t)\,\xi(d\omega,dt)
  =\int G(t,x,m)\,(t,Y_t)_\#\xi(dt,dx,dm).
$$
Markovization must now be applied to $Y$, so the resulting intensity is
allowed to depend on the running maximum as well as on $B_t$.

Similarly, take an \emph{open} set $D\subset\R$, and put
$$
  \tau_D=\inf\{t\ge0:B_t\notin D\},\qquad
  I_t^D=\1_{\{\tau_D\le t\}},\qquad Y_t=(B_t,I_t^D).
$$
This records whether the path has left $D$.
The augmented process is c\`adl\`ag and Markovian. A canonical Markov
family on $\R\times\{0,1\}$ is obtained by using Brownian motion with
flag $1$ after exit, and the stated exit rule when starting from
$(x,0)$ with $x\in D$. The family is completed on the inaccessible states
$(x,0)$, $x\notin D$, by an absorbing extension. Its natural initial law is
$x\mapsto(x,\1_{\{x\notin D\}})$.
The Brownian construction and the Borel exit-time functional give the
measurability required in Assumption~\ref{ass:markov-class}.
For $\gamma((B_s)_{s\le t},t)=G(t,B_t,I_t^D)$,
$$
  \int\gamma((B_s)_{s\le t},t)\,\xi(d\omega,dt)
  =\int G(t,x,i)\,(t,Y_t)_\#\xi(dt,dx,di).
$$

Running minima, running absolute maxima, and suitable finite collections
of exit indicators can be handled in the same way. For a general path
statistic, this extension requires a Markovian augmentation satisfying
Assumption~\ref{ass:markov-class}. The corresponding observed measure is
$(t,Y_t)_\#\xi$ for the augmented state process $Y$.
\end{remark}

\section{Approximation by bounded Markovian intensities}
\label{sec:smoothing}

We next approximate the observed measure of any randomized stopping
time. First add an independent exponential delay. The resulting rule
has a bounded progressively measurable intensity. Its Markovization
has the same bound and the same observed measure.

\begin{lemma}
\label{lem:exp-smoothing}
Let $\xi\in\RST$. For $n\ge1$, define $\xi_n$ by its conditional
measures
$$
  \xi_n^\omega(dt)=f_t^n(\omega)\,dt,\qquad
  f_t^n(\omega)=\int_{[0,t)}ne^{-n(t-s)}\,\xi^\omega(ds).
$$
Then $\xi_n\in\RST$ and $\xi_n=\xi_{\alpha^n}$ for an admissible
progressively measurable intensity with $0\le\alpha^n\le n$.
If $\int t\,\xi(d\omega,dt)<\infty$, then
$$
  \int t\,\xi_n(d\omega,dt)
  =\int t\,\xi(d\omega,dt)+\frac1n.
$$
\end{lemma}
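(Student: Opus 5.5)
Write $F_t(\omega)=\xi^\omega([0,t])$, so $S_t^\xi=1-F_t$. The plan is to treat $\xi_n$ as the law of $\tau+\eta$, where $\eta$ is an independent exponential delay with rate $n$, and to read off its intensity from the survival function.

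\textbf{Survival function and measurability.} By Tonelli,
$$
  \int_0^\infty f_t^n\,dt=\int\int_s^\infty ne^{-n(t-s)}\,dt\,\xi^\omega(ds)=1,
$$
so $\xi_n^\omega$ is a probability measure and $\xi_n$ has first marginal $P_\mu$. Its surviving mass is
$$
  S_t^n:=1-\int_0^t f_u^n\,du
  =1-\int_{[0,t)}\bigl(1-e^{-n(t-s)}\bigr)\xi^\omega(ds)
  =S_t^\xi+\int_{[0,t)}e^{-n(t-s)}\,\xi^\omega(ds),
$$
where $S_t^\xi=1-F_t$. Integrating by parts in $s$ gives $\int_{[0,t)}e^{-n(t-s)}dF_s$ in terms of $F_u$, $u\le t$. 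Concretely, this is the Lebesgue--Stieltjes integral of a continuous kernel against the adapted càdlàg process $F$, so it is $\F_t$-measurable. Hence $\xi_n\in\RST$. The same representation, together with the identity $f_t^n=\int_{[0,t)}ne^{-n(t-s)}dF_s$, shows that $f^n$ is adapted. Moreover $f^n$ is left-continuous in $t$, or at least jointly measurable and adapted, and can be made progressively measurable via Lemma~\ref{lem:progressive-density} applied to $A_t=\int_0^t f^n_u\,du$.

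\textbf{The intensity bound.} Since $S_t^\xi\ge0$, the key pointwise inequality is
$$
  f_t^n=n\int_{[0,t)}e^{-n(t-s)}\xi^\omega(ds)\le n\,S_t^n .
$$
Define $\alpha_t^n=f_t^n/S_t^n$ when $S_t^n>0$ and $\alpha^n_t=0$ otherwise. Then $0\le\alpha^n\le n$, and $\alpha^n$ is progressively measurable. If $S_t^n=0$, the inequality forces $f_t^n=0$.

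\textbf{Admissibility.} $S^n$ is absolutely continuous with derivative $-f^n=-\alpha^nS^n$ almost everywhere. Because $\alpha^n$ is bounded, uniqueness for this linear ODE gives $S_t^n=\exp(-\int_0^t\alpha^n_s\,ds)$; in particular $S^n$ never vanishes. Then $q_t^{\alpha^n}=S_t^n\alpha_t^n=f_t^n$, and $\int_0^\infty q^{\alpha^n}\,dt=1$. By Lemma~\ref{lem:deterministic-survival-identity} (or directly from the ODE), $\alpha^n$ is admissible and $\xi_n=\xi_{\alpha^n}$.

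\textbf{Mean.} Tonelli gives
$$
  \int t\,f_t^n\,dt=\int\bigl(s+\tfrac1n\bigr)\xi^\omega(ds)
$$
pathwise. Integrating over $P_\mu$ then yields $\int t\,\xi_n(d\omega,dt)=\int t\,\xi(d\omega,dt)+\frac1n$.

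The main obstacle is bookkeeping rather than analysis: representing $f^n$ and $S^n$ as adapted (progressive) functionals of the càdlàg process $F$. The cleanest route is to use the integration-by-parts formula
$$
  \int_{[0,t)}e^{-n(t-s)}dF_s=F_{t-}-\int_0^t ne^{-n(t-s)}F_s\,ds .
$$
This expresses both quantities through $F$ on $[0,t]$ and makes progressive measurability evident.
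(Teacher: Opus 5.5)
Your argument is correct and follows essentially the paper's route: Tonelli for the normalization and the mean, the pointwise bound $f_t^n\le nS_t^n$, the choice $\alpha^n=f^n/S^n$, and the linear ODE for the exponential form. Your observation that $f^n$ is left-continuous and adapted gives progressive measurability directly, so you avoid the paper's detour through Lemma~\ref{lem:progressive-density} and truncation.

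There is one harmless slip. The first term in your formula for $S_t^n$ should be $\xi^\omega([t,\infty))=1-F_{t-}$ rather than $S_t^\xi=1-F_t$; the two differ at atoms of $\xi^\omega$. Since you only use the nonnegativity of this term, nothing else changes.
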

\begin{proof}
Tonelli's theorem gives $\int_0^\infty f_t^n\,dt=1$ for almost every
path, since
$$
  \int_0^\infty f_t^n\,dt
  =\int_{[0,\infty)}\int_s^\infty ne^{-n(t-s)}\,dt\,\xi^\omega(ds)=1.
$$
Write $A_t^\xi=\xi^\omega([0,t])$ and
$A_t^n=\xi_n^\omega([0,t])$. Another application of Tonelli gives
$$
  A_t^n
  =\int_{[0,t)}(1-e^{-n(t-s)})\,\xi^\omega(ds)
  =\int_0^t ne^{-n(t-s)}A_s^\xi\,ds
   =\int_0^t f_s^n\,ds.
$$
The process $A^\xi$ is adapted and right-continuous, hence progressive.
The displayed expression shows that $A^n$ is adapted and continuous,
with $A_0^n=0$. Thus $\xi_n\in\RST$.

Let $S_t^n=1-A_t^n$. Pathwise,
$$
  S_t^n=\xi^\omega([t,\infty))
       +\int_{[0,t)}e^{-n(t-s)}\,\xi^\omega(ds)\ge e^{-nt}>0,
$$
and $f_t^n\le nS_t^n$.
By Lemma~\ref{lem:progressive-density}, the density has a finite-valued
progressively measurable representative. We choose one satisfying
$0\le f_t^n\le nS_t^n$, which is possible by truncation at $nS_t^n$.
Set $\alpha_t^n=f_t^n/S_t^n$.
It is progressively measurable and bounded by $n$.
Since $dS_t^n=-S_t^n\alpha_t^n\,dt$ and $S_0^n=1$,
$$
  S_t^n=\exp\left(-\int_0^t\alpha_s^n\,ds\right).
$$
The normalization of $f^n$ proves admissibility.

Conditionally on $\omega$, $\xi_n^\omega$ is the law of $s+u$ under
$\xi^\omega(ds)\otimes ne^{-nu}\,du$.
Its first moment is therefore the first moment of $\xi^\omega$ plus
$1/n$. Integration under $P_\mu$ proves the last assertion.
\end{proof}

\begin{lemma}
\label{lem:smoothed-observed-measures}
For the measures in Lemma~\ref{lem:exp-smoothing},
$$
  \pi_{\xi_n}\Rightarrow\pi_\xi
  \quad\text{in }\mathcal P([0,\infty)\times E).
$$
\end{lemma}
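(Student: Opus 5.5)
Throughout, write $\tau$ for the time coordinate on $\Omega\times[0,\infty)$. The approach is a coupling. Enlarge the space to $\Omega\times[0,\infty)\times[0,\infty)$ and consider
$$
  \Xi_n(d\omega,ds,du)=\xi(d\omega,ds)\,ne^{-nu}\,du .
$$
By the last paragraph of the proof of Lemma~\ref{lem:exp-smoothing}, $\xi_n$ is the image of $\Xi_n$ under $(\omega,s,u)\mapsto(\omega,s+u)$. Consequently, for bounded continuous $G$ on $[0,\infty)\times E$,
$$
  \int G\,d\pi_{\xi_n}
  =\int G\bigl(s+u,X_{s+u}(\omega)\bigr)\,\Xi_n(d\omega,ds,du),
  \qquad
  \int G\,d\pi_\xi
  =\int G\bigl(s,X_s(\omega)\bigr)\,\Xi_n(d\omega,ds,du).
$$
The second identity holds because the $u$-marginal integrates to one. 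It therefore suffices to show that the difference of these two integrals tends to zero.

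Rewrite the $u$-integral with the substitution $u=v/n$. The measure becomes $\xi(d\omega,ds)\,e^{-v}dv$, which does not depend on $n$. The integrand is
$$
  G\bigl(s+v/n,X_{s+v/n}(\omega)\bigr)-G\bigl(s,X_s(\omega)\bigr).
$$
It is bounded by $2\|G\|_\infty$. For every fixed $(\omega,s,v)$ it tends to $0$ as $n\to\infty$: the path $\omega$ is right-continuous at $s$, so $X_{s+v/n}\to X_s$, and $G$ is continuous. Dominated convergence on a fixed probability space then gives convergence of the integrals. By the portmanteau theorem this is exactly $\pi_{\xi_n}\Rightarrow\pi_\xi$.

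The only point needing care is measurability and the identification of $\pi_{\xi_n}$ through the coupling. The map $(\omega,t)\mapsto(t,X_t(\omega))$ is jointly Borel because $X$ has càdlàg paths. The pushforward identity only requires that $\xi_n^\omega$ equals the convolution of $\xi^\omega$ with the exponential law, and this was already established in Lemma~\ref{lem:exp-smoothing}. No tightness argument is needed, because the pointwise convergence comes directly from right-continuity.
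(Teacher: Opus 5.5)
Your proposal is correct and follows essentially the same route as the paper. Both represent $\xi_n$ through the independent exponential delay and rescale $u\mapsto u/n$ to obtain the $n$-independent measure $\xi(d\omega,ds)\,e^{-v}\,dv$. Both then conclude from right-continuity of paths and dominated convergence for bounded continuous test functions.
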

\begin{proof}
Let $F\in C_b([0,\infty)\times E)$.
Using the conditional description of the exponential delay and the
change of variables $u\mapsto u/n$, we have
$$
  \int F\,d\pi_{\xi_n}
  =\int_{\Omega\times[0,\infty)\times[0,\infty)}
       F(t+u/n,X_{t+u/n}(\omega))\,
       \xi(d\omega,dt)e^{-u}\,du.
$$
For every path and every $t,u\ge0$, right continuity gives
$$
  F(t+u/n,X_{t+u/n}(\omega))\longrightarrow F(t,X_t(\omega)).
$$
Since $F$ is bounded, dominated convergence gives
$\int F\,d\pi_{\xi_n}\to\int F\,d\pi_\xi$.
\end{proof}

\begin{theorem}
\label{thm:bounded-markov-approximation}
For every $\xi\in\RST$, there are Borel functions
$\lambda^n:[0,\infty)\times E\to[0,\infty)$ with
$0\le\lambda^n\le n$, each defining an admissible Markovian intensity,
such that
$$
  \pi_{\xi_{\lambda^n}}\Rightarrow\pi_\xi
  \quad\text{in }\mathcal P([0,\infty)\times E).
$$
If $\int t\,\xi(d\omega,dt)<\infty$, then
$$
  \int t\,\pi_{\xi_{\lambda^n}}(dt,dx)
  =\int t\,\pi_\xi(dt,dx)+\frac1n.
$$
\end{theorem}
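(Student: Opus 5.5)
The plan is to combine the exponential smoothing of Lemma~\ref{lem:exp-smoothing} with the Markovization of Theorem~\ref{thm:markovization-intensity}, and then to check that the bound $n$ survives Markovization.

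\textbf{Smoothing step.} Fix $\xi\in\RST$ and $n\ge1$, and let $\xi_n=\xi_{\alpha^n}$ be as in Lemma~\ref{lem:exp-smoothing}, with admissible progressively measurable $0\le\alpha^n\le n$. By Lemma~\ref{lem:smoothed-observed-measures}, $\pi_{\xi_n}\Rightarrow\pi_\xi$. When $\int t\,d\xi<\infty$, we also have $\int t\,d\xi_n=\int t\,d\xi+1/n$. Since $t$ is a function of the time coordinate, $\int t\,\pi_\zeta(dt,dx)=\int t\,\zeta(d\omega,dt)$ for every $\zeta\in\RST$.

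\textbf{Markovization step.} Apply Theorem~\ref{thm:markovization-intensity} to $\alpha^n$. It gives a finite Borel version $\widehat\lambda^n=d\pi_{\alpha^n}/d\overline L^{\alpha^n}$, an admissible Markovian intensity with $\pi_{\xi_{\widehat\lambda^n}}=\pi_{\alpha^n}=\pi_{\xi_n}$. Weak convergence and the first-moment identity then transfer immediately, because both are statements about the observed measure alone.

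\textbf{Bound on the version.} The main point is to show that a version with $0\le\lambda^n\le n$ can be chosen. For Borel $A\subset[0,\infty)\times E$,
$$
  \pi_{\alpha^n}(A)=\E_{P_\mu}\int_0^\infty S_t^{\alpha^n}\alpha_t^n\1_A(t,X_t)\,dt
  \le n\,\E_{P_\mu}\int_0^\infty S_t^{\alpha^n}\1_A(t,X_t)\,dt=n\,\overline L^{\alpha^n}(A).
$$
Hence $\widehat\lambda^n\le n$ holds $\overline L^{\alpha^n}$-a.e. Set $\lambda^n=\widehat\lambda^n\wedge n$. This is still a finite-valued Borel version of the same Radon--Nikodym derivative.

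Theorem~\ref{thm:markovization-intensity} applies to every finite-valued Borel version. So $\lambda^n$ is admissible and satisfies $\pi_{\xi_{\lambda^n}}=\pi_{\xi_n}$. No separate argument is needed to show that modifying the version on an $\overline L^{\alpha^n}$-null set leaves the process $\lambda^n(t,X_t)$ unchanged, since the theorem is stated for arbitrary versions.

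I expect no serious obstacle. The only care needed is precisely this last point: the truncation must be done on a Radon--Nikodym version, which the theorem explicitly allows, rather than on the process directly.
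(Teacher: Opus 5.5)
Your proposal is correct and follows essentially the same route as the paper: exponential smoothing via Lemma~\ref{lem:exp-smoothing}, the domination $\pi_{\xi_n}\le n\,\overline L^{\alpha^n}$ to select a Borel version of $d\pi_{\xi_n}/d\overline L^{\alpha^n}$ with values in $[0,n]$, Theorem~\ref{thm:markovization-intensity} for admissibility and preservation of the observed measure, and then Lemma~\ref{lem:smoothed-observed-measures} together with the first-moment part of Lemma~\ref{lem:exp-smoothing} for the two conclusions. Your explicit truncation $\widehat\lambda^n\wedge n$ and the identity $\int t\,\pi_\zeta(dt,dx)=\int t\,\zeta(d\omega,dt)$ only spell out steps that the paper leaves implicit.
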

\begin{proof}
By Lemma~\ref{lem:exp-smoothing}, $\xi_n=\xi_{\alpha^n}$ with
$0\le\alpha^n\le n$.
For every Borel $A\subset[0,\infty)\times E$,
$$
  \pi_{\xi_n}(A)
  =\E_{P_\mu}\int_0^\infty
      \1_A(t,X_t)S_t^{\alpha^n}\alpha_t^n\,dt
  \le n\,\overline L^{\alpha^n}(A).
$$
Hence the derivative
$\lambda^n=d\pi_{\xi_n}/d\overline L^{\alpha^n}$ has a Borel version
with values in $[0,n]$.
Theorem~\ref{thm:markovization-intensity} shows that this version is
admissible and that $\pi_{\xi_{\lambda^n}}=\pi_{\xi_n}$.
Lemma~\ref{lem:smoothed-observed-measures} gives the weak convergence.
Finally, Markovization preserves the time marginal of the observed
measure, so the first-moment identity follows from
Lemma~\ref{lem:exp-smoothing}.
\end{proof}

Thus the observed measures generated by bounded Markovian intensities
are weakly dense among the observed measures of randomized stopping
times. If $(X_t)_\#\xi=\nu$, the corresponding terminal laws converge
weakly to $\nu$, since $(t,x)\mapsto x$ is continuous.

\section{Finite-dimensional constraints in Skorokhod embedding}
\label{sec:brownian}

We now specialize to one-dimensional Brownian motion and consider
finitely many linear constraints on its terminal law. The prescribed
vector is taken from the relative interior of a convex-order moment
region.

Let $B_t=B_0+W_t$, where $W$ is standard Brownian motion, independent of
$B_0$, and $B_0$ has law $\mu\in\mathcal P_1(\R)$.
Here $\mathcal P_1(\R)$ consists of probability measures with finite
first absolute moment. For $\xi\in\RST$, write
$p_\xi=(\operatorname{pr}_2)_\#\pi_\xi$ for its terminal law, where
$\operatorname{pr}_2(t,x)=x$.
Let $g_1,\ldots,g_m:\R\to\R$ be continuous functions of at most
linear growth, and set
$$
  \mathcal C_\mu(g_1,\ldots,g_m)
  =\left\{
    \left(\int_\R g_i\,d\eta\right)_{i=1}^m:
    \eta\in\mathcal P_1(\R),\ \mu\cx\eta
   \right\}.
$$
Convex order means that $\int\varphi\,d\mu\le\int\varphi\,d\eta$
for every real-valued convex function $\varphi$, with the usual
extended-integral interpretation. In particular, the means agree.
The set $\mathcal C_\mu$ is nonempty and convex: it contains the vector
for $\eta=\mu$, and mixtures preserve convex order and the moment map.

We use the following external embedding result: if
$\mu,\eta\in\mathcal P_1(\R)$ and $\mu\cx\eta$, there is an almost
surely finite ordinary Brownian stopping time $\tau$ such that
$B_\tau\sim\eta$ and $(B_{t\wedge\tau})_{t\ge0}$ is uniformly
integrable. For a precise version with a general initial distribution,
see \cite[Theorem~2.8]{CoxOblojTouzi2019}; a translation by the common
mean removes the centering convention there. The case $\mu=\eta$ is
realized by $\tau=0$.

\begin{lemma}
\label{lem:brownian-w1}
Let $\tau$ be an almost surely finite stopping time with
$B_\tau\sim\eta\in\mathcal P_1(\R)$.
Let $e_n\sim\operatorname{Exp}(n)$ be independent of $(B,\tau)$.
Then $\eta_n=\Law(B_{\tau+e_n})$ belongs to $\mathcal P_1(\R)$ and
$$
  W_1(\eta_n,\eta)\le\frac1{\sqrt{2n}}\longrightarrow0.
$$
\end{lemma}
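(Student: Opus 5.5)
The natural coupling is to place $B_\tau$ and $B_{\tau+e_n}$ on the same probability space and bound the Wasserstein distance by $\E|B_{\tau+e_n}-B_\tau|$. Write $B_{\tau+e_n}-B_\tau=W_{\tau+e_n}-W_\tau$. By the strong Markov property of Brownian motion at the almost surely finite stopping time $\tau$, the process $\widetilde W_u=W_{\tau+u}-W_\tau$ is a standard Brownian motion independent of $\F_\tau$. Since $e_n$ is independent of $(B,\tau)$, it is also independent of $\widetilde W$. (To be careful, enlarge the filtration by the independent variable $e_n$ at time zero; $B$ remains a Brownian motion and $\tau$ remains a stopping time.) Hence, conditionally on $e_n=u$, the increment is $N(0,u)$.

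Next compute the expectation. We have $\E[|\widetilde W_u|]=\sqrt{2u/\pi}$. Integrating against the exponential law and using Jensen's inequality,
$$
  \E|B_{\tau+e_n}-B_\tau|
  =\sqrt{2/\pi}\,\E\sqrt{e_n}
  \le\sqrt{2/\pi}\,\sqrt{\E e_n}
  =\sqrt{\frac{2}{\pi n}}.
$$
The stated bound $1/\sqrt{2n}$ can be reached either way. A cruder Cauchy--Schwarz estimate gives $\E|\widetilde W_{e_n}|\le\sqrt{\E e_n}=1/\sqrt n$, which is not enough. One can instead compute exactly, using $\E\sqrt{e_n}=\Gamma(3/2)/\sqrt n=\sqrt{\pi}/(2\sqrt n)$, which gives $\E|\widetilde W_{e_n}|=\sqrt{2/\pi}\cdot\sqrt\pi/(2\sqrt n)=1/\sqrt{2n}$. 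Alternatively, the fact that $\widetilde W_{e_n}$ is Laplace distributed with rate $\sqrt{2n}$ gives the same value. I will use the exact Gamma computation, which yields $W_1(\eta_n,\eta)\le 1/\sqrt{2n}$.

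Finally, membership in $\mathcal P_1(\R)$ follows from $\E|B_{\tau+e_n}|\le\E|B_\tau|+\E|\widetilde W_{e_n}|<\infty$, since $\eta\in\mathcal P_1(\R)$. The coupling $(B_\tau,B_{\tau+e_n})$ has marginals $\eta$ and $\eta_n$, so the Kantorovich definition of $W_1$ gives the inequality, and the limit $1/\sqrt{2n}\to0$ is immediate. The only step needing care is the independence and strong Markov justification with the auxiliary randomization; everything else is an explicit Gaussian calculation.
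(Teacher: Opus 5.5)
Your proof is correct and matches the paper's: couple $B_\tau$ with $B_{\tau+e_n}$, use the strong Markov property and condition on $e_n$, evaluate $\E\sqrt{e_n}=\Gamma(3/2)/\sqrt n$ exactly, and bound $\E|B_{\tau+e_n}|$ by the triangle inequality. One small point: your Jensen display gives only $\sqrt{2/(\pi n)}$, which is larger than $1/\sqrt{2n}$, so it does not reach the stated constant; the exact Gamma (or Laplace) computation you end up using is what the bound requires.
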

\begin{proof}
The pair $(B_{\tau+e_n},B_\tau)$ is a coupling of $\eta_n$ and $\eta$.
By the strong Markov property, the increments after $\tau$ form an
independent Brownian motion. Conditioning on $e_n$ gives
$$
  \E|B_{\tau+e_n}-B_\tau|
  =\sqrt{\frac2\pi}\,\E\sqrt{e_n}
  =\sqrt{\frac2\pi}\,\frac{\Gamma(3/2)}{\sqrt n}
   =\frac1{\sqrt{2n}}.
$$
This proves the bound and, together with $\E|B_\tau|<\infty$, the
finite first moment of $\eta_n$.
\end{proof}

\begin{lemma}
\label{lem:ri-conv}
Let $C\subset\R^m$ be convex and let $D\subset C$ be dense in $C$
in the relative topology. Then $\ri C\subset\conv D$.
\end{lemma}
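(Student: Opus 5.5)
We work inside the affine hull $\aff C$, identified with $\R^k$ where $k=\dim\aff C$, so that $\ri C$ is the interior of $C$ there. Fix $x\in\ri C$. The plan is to surround $x$ by the vertices of a small simplex lying in $\ri C$, perturb each vertex to a nearby point of $D$, and check that $x$ remains a convex combination of the perturbed points. If $k=0$, then $C=\{x\}$ and density gives $D=\{x\}$, so the claim is trivial. Assume $k\ge1$.

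First choose $\varepsilon>0$ such that the closed relative ball $\overline B(x,2\varepsilon)\cap\aff C$ is contained in $C$; this is possible since $x\in\ri C$. Next fix affinely independent points $v_0,\dots,v_k$ in $\aff C$ with $\|v_j-x\|\le\varepsilon$ and $x$ in the interior of their convex hull. For example, take a regular simplex centered at $x$; then $x=\sum_j\frac1{k+1}v_j$ with all weights positive. Let $\delta>0$ be the inradius of $\conv\{v_j\}$ around $x$.

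The main step is a stability property of the simplex. Since $v_j\in\ri C$ and $D$ is relatively dense in $C$, we can pick $d_j\in D$ with $\|d_j-v_j\|<\eta$, with $\eta$ chosen below. For small $\eta$ the $d_j$ are still affinely independent, because the determinant of the barycentric coordinate matrix depends continuously on the points. The barycentric coordinates of $x$ with respect to $d_0,\dots,d_k$ are obtained by solving the linear system $\sum_j\theta_j d_j=x$, $\sum_j\theta_j=1$. Its matrix is invertible at $\eta=0$ and the solution depends continuously on the points. At $\eta=0$ the solution is $\theta_j=\frac1{k+1}>0$, so for $\eta$ small enough all $\theta_j$ remain positive. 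Hence $x=\sum_j\theta_j d_j\in\conv D$.

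This continuity argument is the only real point, and it is elementary linear algebra. If a quantitative version is wanted, one can use the triangle and inradius bound instead: choose $\eta<\delta/2$ scaled appropriately, so that $x$ stays in the perturbed simplex. The continuity version suffices. The only care needed is to keep every $d_j$ in $\aff C$, which holds automatically since $D\subset C$. Since $x\in\ri C$ was arbitrary, we conclude $\ri C\subset\conv D$.
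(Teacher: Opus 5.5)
Your proposal is correct and follows essentially the same route as the paper: you place a nondegenerate simplex around the point inside a relative ball contained in $C$, move each vertex to a nearby point of $D$, and use continuity of barycentric coordinates to keep all weights positive, with the case $\dim\aff C=0$ handled separately. The inradius remark is unnecessary, since the continuity argument already suffices.
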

\begin{proof}
There is nothing to prove if $C$ is empty.
Let $c\in\ri C$ and $d=\dim\aff C$.
If $d=0$, then $C=\{c\}$ and density gives $D=C$.
For $d>0$, a sufficiently small ball about $c$ in $\aff C$ lies in $C$.
Choose a nondegenerate $d$-dimensional simplex inside this ball with
vertices $a^0,\ldots,a^d$ and with $c$ in its relative interior.
Density allows each vertex to be replaced by a sufficiently close point
$b^j\in D$.
The barycentric coordinates of $c$ depend continuously on the vertices
near a nondegenerate simplex. They therefore remain strictly positive
for sufficiently small perturbations. Thus
$c\in\conv\{b^0,\ldots,b^d\}\subset\conv D$.
\end{proof}

\begin{proposition}
\label{prop:finite-terminal-constraints}
Suppose that $g_1,\ldots,g_m$ are continuous and have at most linear
growth. If
$$
  c\in\ri\mathcal C_\mu(g_1,\ldots,g_m),
$$
then there is an admissible bounded Borel Markovian intensity
$\lambda:[0,\infty)\times\R\to[0,\infty)$ such that
$p_{\xi_\lambda}\in\mathcal P_1(\R)$, $\mu\cx p_{\xi_\lambda}$, and
$$
  \int_\R g_i(x)\,p_{\xi_\lambda}(dx)=c_i,
  \qquad i=1,\ldots,m.
$$
\end{proposition}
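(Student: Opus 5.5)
The plan is to realize a dense subset $D$ of $C:=\mathcal C_\mu(g_1,\ldots,g_m)$ by bounded Markovian intensities, apply Lemma~\ref{lem:ri-conv} to write $c$ as a convex combination of finitely many realized vectors, and then combine the corresponding Markovian rules into a single bounded Markovian intensity.

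\textbf{Step 1: the dense set.} Let $D$ be the set of vectors $(\int g_i\,d\eta')_i$, where $\eta'=p_{\xi_\lambda}$ for some admissible Borel Markovian $\lambda$ that is bounded, satisfies $p_{\xi_\lambda}\in\mathcal P_1$ and $\mu\cx p_{\xi_\lambda}$, and has $(B_{t\wedge\tau})_{t\ge0}$ uniformly integrable under $\xi_\lambda$.

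To show $D\subset C$ is dense, fix $\eta$ with $\mu\cx\eta$. Take the uniformly integrable embedding $\tau$ from \cite[Theorem~2.8]{CoxOblojTouzi2019} and delay it by an independent $e_n$, which gives Lemma~\ref{lem:exp-smoothing}. Then apply Theorem~\ref{thm:bounded-markov-approximation}, which yields $\lambda^n\le n$ with the same observed measure and hence the same terminal law $\eta_n$. Lemma~\ref{lem:brownian-w1} gives $W_1(\eta_n,\eta)\to0$. Since the $g_i$ are continuous with linear growth, $\int g_i\,d\eta_n\to\int g_i\,d\eta$.

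It remains to check $\mu\cx\eta_n$. The stopped process at $\tau+e_n$ is still uniformly integrable, because $e_n$ only adds an independent Brownian increment of integrable size $\sqrt{e_n}$. Hence $\mu\cx\eta_n$ by optional stopping and Jensen's inequality. Remark~\ref{rem:skorokhod-connection} shows that uniform integrability is preserved under Markovization, so the Markovian rule keeps it. Thus each vector of $C$ is a limit of points of $D$.

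\textbf{Step 2: convex combination.} Lemma~\ref{lem:ri-conv} gives $c=\sum_j w_jd^j$ with $d^j\in D$ realized by bounded Markovian $\lambda_j$, where $w_j>0$ and $\sum_j w_j=1$.

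\textbf{Step 3: mixing.} Form the mixture $\zeta=\sum_j w_j\xi_{\lambda_j}$. It lies in $\RST$, since both the first marginal $P_\mu$ and adaptedness are preserved by mixtures. Its observed measure is $\sum_j w_j\pi_{\xi_{\lambda_j}}$, so $\int g_i\,dp_\zeta=c_i$. Moreover $\zeta=\xi_\beta$ with
\[
  \beta_t=\frac{\sum_j w_jS^{\lambda_j}_t\lambda_j(t,X_t)}{\sum_j w_jS^{\lambda_j}_t},
\]
which is progressive and bounded by $\max_j\|\lambda_j\|_\infty$, and is admissible because the $S^{\lambda_j}$ are strictly positive. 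Markovizing via Theorem~\ref{thm:markovization-intensity} gives $\lambda=d\pi_\zeta/d\overline L^\beta$, which is again bounded by the same domination argument as in Theorem~\ref{thm:bounded-markov-approximation}, and has the same observed measure, hence the same terminal law.

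Convex order survives both operations. The terminal law of the mixture is the mixture of the terminal laws, so $\mu\cx p_\zeta$ because convex order is preserved under mixtures of measures dominating the same $\mu$. Markovization does not change the terminal law. Finally $p_\zeta\in\mathcal P_1$ as a finite mixture of laws in $\mathcal P_1$.

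\textbf{Main obstacle.} The delicate point is Step 1: verifying that exponential delay and Markovization keep $\mu\cx\eta_n$. I would handle it through the one-time laws identity of Remark~\ref{rem:skorokhod-connection} combined with the uniform integrability bound $\E\sqrt{e_n}<\infty$, and check that the martingale property of the stopped Brownian motion is preserved.
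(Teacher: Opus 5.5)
Your proposal is correct and follows the paper's proof step for step: a dense set of moment vectors realized by exponentially delayed embeddings followed by bounded Markovization, then Lemma~\ref{lem:ri-conv}, then a finite mixture whose intensity $q_t/S_t$ is bounded by the largest of the bounds, then a final Markovization with a bounded Borel version. The only local difference is the convex-order check. The paper obtains $\eta\cx\eta_n$ directly from $\E[B_{\tau+e_n}\mid B_\tau]=B_\tau$ and conditional Jensen, and then uses transitivity with $\mu\cx\eta$, which needs no uniform integrability at all. Your route through uniform integrability of the delayed stopped process and optional stopping is valid. However, the uniform-integrability clause in your set $D$ and the appeal to Remark~\ref{rem:skorokhod-connection} are superfluous, since $\mu\cx p$ depends only on the terminal law, which Markovization preserves.
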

\begin{proof}
Let $D_0$ be the set of vectors obtained by the following procedure.
Choose $\eta\in\mathcal P_1(\R)$ with $\mu\cx\eta$, choose an
embedding $B_\tau\sim\eta$, add an independent delay of law
$\operatorname{Exp}(n)$ for some integer $n\ge1$, and Markovize the
resulting randomized stopping time.
By Theorem~\ref{thm:bounded-markov-approximation}, its intensity can be
chosen bounded by $n$.

To check $D_0\subset\mathcal C_\mu$, set
$\eta_n=\Law(B_{\tau+e_n})$.
Lemma~\ref{lem:brownian-w1} gives $\eta_n\in\mathcal P_1(\R)$.
The strong Markov property and independence of $e_n$ imply
$$
  \E[B_{\tau+e_n}\mid B_\tau]=B_\tau.
$$
Conditional Jensen's inequality gives $\eta\cx\eta_n$, hence
$\mu\cx\eta_n$.
Markovization preserves this terminal law, so its moment vector is in
$\mathcal C_\mu$.

Now take any $a\in\mathcal C_\mu$, represented by a measure $\eta$.
Using an embedding of $\eta$ and letting $n\to\infty$, we get
$\eta_n\to\eta$ in $W_1$ by Lemma~\ref{lem:brownian-w1}.
Continuity and linear growth of $g_i$ give
$$
  \int_\R g_i\,d\eta_n\longrightarrow\int_\R g_i\,d\eta=a_i,
  \qquad i=1,\ldots,m.
$$
This follows from the standard characterization of $W_1$ convergence by
weak convergence and uniform integrability of first moments; equivalently,
one may truncate $g_i$ and control the tails by the first moments; see
\cite{BogachevWeakConvergence}.
Applying Lemma~\ref{lem:exp-smoothing} and
Theorem~\ref{thm:markovization-intensity} to the randomized stopping time
with kernel $\delta_{\tau(\omega)}$ shows that each approximating vector
belongs to $D_0$.
Thus $D_0$ is dense in $\mathcal C_\mu$.

By Lemma~\ref{lem:ri-conv}, there are $a^1,\ldots,a^N\in D_0$ and
$\theta_j\ge0$, with $\sum_j\theta_j=1$, such that
$c=\sum_j\theta_j a^j$.
For each $j$, choose an admissible Markovian intensity $\lambda^j$ that
realizes $a^j$ and satisfies $0\le\lambda^j\le n_j$.
Let $\xi^j=\xi_{\lambda^j}$ and $\xi=\sum_j\theta_j\xi^j$.
This mixture is in $\RST$, since its first marginal is $P_\mu$ and its
survival function is adapted. Put
$$
  S_t^\xi=\sum_j\theta_j S_t^{\lambda^j},\qquad
  q_t^\xi=\sum_j\theta_j S_t^{\lambda^j}\lambda^j(t,B_t).
$$
Let $M=\max_j n_j$. Then $S_t^\xi\ge e^{-Mt}>0$ and
$0\le q_t^\xi\le M S_t^\xi$.
The process $\alpha_t=q_t^\xi/S_t^\xi$ is progressive and bounded by
$M$. Moreover,
$$
  S_t^\xi=1-\int_0^t q_s^\xi\,ds,\qquad
  \int_0^\infty q_t^\xi\,dt=1
  \quad P_\mu\text{-a.s.}
$$
The ordinary differential identity $dS_t^\xi=-S_t^\xi\alpha_t\,dt$
therefore shows that $\alpha$ is admissible and $\xi=\xi_\alpha$.

Its terminal law is a finite mixture of the terminal laws of $\xi^j$.
It has finite first moment, is above $\mu$ in convex order, and realizes
$c$.
Apply Theorem~\ref{thm:markovization-intensity} to $\alpha$.
The resulting $\lambda$ preserves the terminal law.
The estimate $\pi_\alpha\le M\overline L^\alpha$ allows its Borel
version to be chosen in $[0,M]$ by the argument used in
Theorem~\ref{thm:bounded-markov-approximation}.
This proves all assertions.
\end{proof}

\begin{example}
Let $K_1<\cdots<K_m$ and $g_i(x)=(x-K_i)^+$.
These functions are continuous and have linear growth.
Proposition~\ref{prop:finite-terminal-constraints} applies to
$$
  \mathcal C_\mu(g_1,\ldots,g_m)
  =\left\{
    \left(\int_\R(x-K_i)^+\,d\eta\right)_{i=1}^m:
    \eta\in\mathcal P_1(\R),\ \mu\cx\eta
   \right\}.
$$
Every vector in its relative interior is realized by a bounded
Markovian intensity.
\end{example}

\begin{remark}[Boundary points]
\label{rem:boundary}
The relative-interior assumption is essential.
Every intensity-generated
Brownian stopping time has a terminal law absolutely continuous with
respect to Lebesgue measure. Indeed, if $A\subset\R$ is a Lebesgue-null
Borel set, then $P_\mu(B_t\in A)=0$ for every $t>0$, because Brownian
transition laws have densities. Tonelli's theorem gives
$$
  p_{\xi_\alpha}(A)
  =\E_{P_\mu}\int_0^\infty q_t^\alpha\1_{\{B_t\in A\}}\,dt=0.
$$
Now take $\mu=\delta_0$ and $g(x)=|x|$.
Then $\mathcal C_{\delta_0}(g)=[0,\infty)$.
The boundary value $0$ is attained by the ordinary stopping time
$\tau=0$. The identity $\int|x|\,p(dx)=0$ forces $p=\delta_0$.
The absolute-continuity property above therefore excludes an
intensity-generated realization of this boundary value.
\end{remark}

\begin{remark}[The class of test functions]
\label{rem:test-functions}
Continuity is essential in the general proposition.
For $\mu=\delta_0$ and $g=\1_{\{0\}}$, define the moment region by the
same formula. It equals $[0,1]$, since the measures
$$
  \eta_a=a\delta_0+\frac{1-a}{2}(\delta_{-1}+\delta_1),
  \qquad 0\le a\le1,
$$
are above $\delta_0$ in convex order and satisfy $\int g\,d\eta_a=a$.
Thus $1/2$ lies in the relative interior of the region.
Every intensity-generated terminal law assigns zero mass to $\{0\}$ by
Remark~\ref{rem:boundary}. Thus $1/2$ lies outside the set of values
generated by intensities. For arbitrary discontinuous functions, $W_1$
convergence leaves the corresponding integrals uncontrolled.
\end{remark}

\bibliographystyle{plain}
\bibliography{references}
\end{document}